\newcommand{\dist}{\mathrm{dist}\,}          
\newcommand{\spt}{\mathrm{spt}\,}            
\newcommand{\capA}{\mathcal{A}}
\newcommand{\capC}{\mathcal{C}}
\newcommand{\capG}{\mathcal{G}}
\newcommand{\capH}{\mathcal{H}}
\newcommand{\capM}{\mathcal{M}}
\newcommand{\capS}{\mathcal{S}}
\newcommand{\capX}{\mathcal{X}}
\newcommand{\mathR}{\mathbb{R}}
\newcommand{\mathS}{\mathbb{S}}
\newcommand{\Pers}{\text{Per}_s}
\theoremstyle{mystyle}
\newtheorem{theorem}{Theorem}[section]
\newtheorem*{theorem*}{Theorem}
\newtheorem{lemma}[theorem]{Lemma}
\newtheorem{proposition}[theorem]{Proposition}
\theoremstyle{definition}
\theoremstyle{remark}
\newtheorem{remark}[theorem]{Remark}
\begin{document}
	\title{On the shape of hypersurfaces with boundary which have zero fractional mean curvature}
	
	\author{Fumihiko Onoue\footnote{Technische Universit\"at M\"unchen, Bolzmannstrasse 3, 85748 Garching, Germany.\\ 
			E-mail: {\tt fumihiko.onoue@tum.de}}
	}
	
	\date{\today}	
	
	\maketitle

\begin{abstract}
	We consider hypersurfaces with boundary in $\mathR^N$ that are the critical points of the fractional area introduced by Paroni, Podio-Guidugli, and Seguin in \cite{PPgS18}. In particular, we study the shape of such hypersurfaces in several simple settings. First we show that the critical points whose boundary is an $(N-2)$-sphere coincide with $(N-1)$-balls. Second we show that the critical points whose boundary is the union of two parallel $(N-2)$-spheres do not coincide with two parallel $(N-1)$-balls. Moreover, the interior of the critical points does not intersect the boundary of the convex hull of the two $(N-2)$-spheres, while it can happen in the situation considered by Dipierro, Onoue, and Valdinoci in \cite{DOV22}. We also obtain a quantitative bound which may tell us how different the critical points are from the two $(N-1)$-balls. Finally, in the same setting as in the second case, we show that, if the two parallel boundaries are far away from each other, then the critical points are disconnected and, if the two parallel boundaries are close to each other, then the boundaries are in the same connected component of the critical points when $N \geq 3$. Moreover, by computing the fractional mean curvature of a cone with the same boundaries as those of the critical points, we also obtain that the interior of the critical points does not touch the cone if the critical points are contained in either the inside or the outside of the cone.    
\end{abstract}

\tableofcontents

\section{Introduction}
Fractional minimal surfaces without boundary were first investigated by Caffarelli, Roquejoffre, and Savin in \cite{CRS10} and, since then, this topic has attracted many authors to study their geometric properties as an analogy of classical minimal surfaces. Roughly speaking, a fractional (or nonlocal) minimal surface without boundary is given as the boundary of a set which minimizes an energy functional defined by the pointwise interaction of a set and its complement. The typical interaction taken into account is scaling and translation invariant with some polynomial decay. Precisely, if $s\in(0,\,1)$ and $\Omega$ is an open set with smooth boundary, one of such standard energies of a set $E \subset \mathR^N$ relative to $\Omega$ is the so-called fractional perimeter in $\Omega$ and is defined by
\begin{equation}\label{fractionalPeriCRS}
	P_s(E ; \Omega) \coloneqq \int_{E \cap \Omega}\int_{E^c}\frac{dx\,dy}{|x-y|^{N+s}} + \int_{E \cap \Omega^c}\int_{E^c \cap \Omega}\frac{dx\,dy}{|x-y|^{N+s}}
\end{equation}
where we denote by $E^c$ the complement of $E$. With this notion, we say that a set $E \subset \mathR^N$ is a minimizer of $P_s$ relative to $\Omega$ if it holds that
\begin{equation*}
	P_s(E ; \Omega') \leq P_s(F; \Omega)
\end{equation*}
for any open bounded set $\Omega' \subset \Omega$ and any $F \subset \mathR^N$ with $F \setminus \Omega' = E \setminus \Omega'$. The existence and regularity of such minimizers was shown by Caffarelli, Roquejoffre, and Savin in \cite{CRS10}. Moreover, they showed in \cite{CRS10} that, if a set $E \subset \mathR^N$ is a minimizer of $P_s$, then the following Euler-Lagrange equation holds in the viscosity sense:
\begin{equation}\label{eulerLagrangeEqCRS}
	\int_{\mathR^N} \frac{\chi_{E^c}(y) - \chi_{E}(y)}{|y-x|^{N+s}} \,dy = 0
\end{equation}
for $x \in \partial E$. The integral in \eqref{eulerLagrangeEqCRS} is intended in the Cauchy principal value sense. This can be regarded as a nonlocal counterpart of the classical minimal surface equation and the left-hand side in \eqref{eulerLagrangeEqCRS} is the so-called fractional mean curvature on the boundary $\partial E$. Dipierro, Savin, and Valdinoci in particular have revealed many properties which classical minimal surfaces cannot possess (see, for instance, \cite{DSV17, DSV20} for the detail). In addition, many authors have studied the fractional(nonlocal) minimal surfaces or minimal graphs for more than a decade since the fractional(nonlocal) minimal surfaces appear in many other topics in which a long-range interaction is involved (see \cite{CDV17, SavVal12}). For further discussions about the geometric features of fractional(nonlocal) minimal surfaces without boundary, we refer to \cite{CafVal11, CafVal13, DdPW18, BLV19, BDLV20, CoLu21, DOV22, BDV23, DSV23MaxPrin, DSV23Bdry, DSV16}. 

Quite recently, motivated by some mathematical modelling of thin elastic structures, Paroni, Podio-Guidugli, and Seguin in \cite{PPgS18} introduced a new notion of fractional areas and fractional mean curvatures for smooth manifolds which are not necessarily closed in the following way: let $\Omega \subset \mathR^N$ be a bounded domain and let $\capM \subset \Omega$ be any $(N-1)$-dimensional compact smooth manifold with or without boundary. Then the fractional area of $\capM$ relative to $\Omega$ is defined by
\begin{equation}\label{fractionalPeriPPgS}
	\Pers(\capM ; \Omega) \coloneqq c_N\iint_{\capX(\capM)} \frac{ \max\{\chi_{\Omega}(x),\,\chi_{\Omega}(y)\} }{|x-y|^{N+s}} \,dx\,dy
\end{equation}
where $c_N$ is some positive dimensional constant and $\capX(\capM)$ is a set of all pairs $(x,\,y) \in \mathR^N \times \mathR^N$ such that the segment $[x; \, y]$ with two end points $x$ and $y$ has an odd number of cross intersections with $\capM$ and $[x; \, y]$ is not tangent to $\capM$. Note that the presence of the term $\max\{\chi_{\Omega}(x),\,\chi_{\Omega}(y)\}$ in \eqref{fractionalPeriPPgS} is necessary to ensure that the integral converges whenever $\partial \capM \neq \emptyset$.

As is explained in \cite{PPgS18}, if a $(N-1)$-dimensional smooth manifold $\capM$ satisfies $\capM = \partial E$ for some set $E \subset \mathR^N$, then two notions \eqref{fractionalPeriCRS} and \eqref{fractionalPeriPPgS} are equivalent, i.e., it holds that
\begin{equation*}
	\Pers(\capM; \Omega) = P_s(E; \Omega).
\end{equation*}
Interestingly, Paroni, Podio-Guidugli, and Seguin also proved in \cite[Theorem 3.3]{PPgS18} that $(1-s)\Pers(\capM; \Omega) \to \capH^{N-1}(\capM)$ as $s \uparrow 1$ for a compact $(N-1)$-dimensional $C^1$ manifold $\capM$ contained in a bounded domain $\Omega$, as it happens for $P_s$ in \eqref{fractionalPeriCRS} (see \cite{ADPM11, CaVa11}). See \cite{Seguin20, PPgS22, MihSeg23} for further discussions on $\Pers$.

This manuscript is devoted to develop the theory of the fractional area $\Pers$ for manifolds with boundary. In particular, we aim to investigate the shape and topology of critical points of $\Pers$. Here the critical point of $\Pers$ is defined by a smooth manifold such that the first variation of $\Pers$ vanishes with respect to a perturbation associated with the unit normal vector of that manifold (in the sequel, we will call this perturbation ``normal variations''). The authors in \cite{PPgS18} obtained a necessary and sufficient condition for the vanishing of the first variation for manifolds as follows: let $\capM$ be an orientable compact smooth manifold with or without boundary and assume that $\capM$ is contained in a bounded domain $\Omega \subset \mathR^N$. Then it holds that
\begin{equation}\label{eulerLagrangeEqPPgS}
	\delta \Pers(\capM; \Omega) = 0  \quad \iff \quad H_{\capM,s}(z) = 0 \quad \text{for any $z \in \capM$}.
\end{equation}
Here we denote by $\delta \Pers(\capM; \Omega)$ the first variation of $\capM$ under normal variations and $H_{\capM,s}$ is the fractional mean curvature associated with $\Pers$ which is defined by
\begin{equation*}
	H_{\capM,s}(z) \coloneqq c_N \, \int_{\mathR^N} \frac{\chi_{\capA_i(z)}(y) - \chi_{\capA_e(z)}(y) }{|y-z|^{N+s}} \,dy
\end{equation*} 
for any $z \in \capM$ where $c_N$ is as in \eqref{fractionalPeriPPgS} and the sets $\capA_i(z)$ and $\capA_e(z)$ are defined by
\begin{align}
	& \capA_i(z) \coloneqq \{ y \in \mathR^N \mid \text{either $(z,y) \in \capX(\capM) \, \& \, (z-y) \cdot \nu_{\capM}(z) < 0$}  \nonumber\\
	& \qquad \quad \qquad \quad \qquad \quad \qquad \text{or $(z,y) \not\in \capX(\capM) \, \& \, (z-y) \cdot \nu_{\capM}(z) > 0$} \} \label{interior} \\
	& \capA_e(z) \coloneqq \{ y \in \mathR^N \mid \text{either $(z,y) \in \capX(\capM) \, \& \, (z-y) \cdot \nu_{\capM}(z) > 0$}  \nonumber\\
	& \qquad \quad \qquad \quad \qquad \quad \qquad \text{or $(z,y) \not\in \capX(\capM) \, \& \, (z-y) \cdot \nu_{\capM}(z) < 0$} \}. \label{exterior}
\end{align}
The sets $\capA_i(z)$ and $\capA_e(z)$ can be regarded as the ``interior'' and ``exterior'' of $\capM$ relative to the point $z$, respectively, and these sets are determined uniquely once the unit normal vector of $\capM$ at $z$ is specified. See \cite{PPgS18} for more discussions on the notions. Note that, if a manifold is not orientable, then the unit normal vector of the manifold cannot be determined uniquely and neither can the ``interior'' $\capA_i$ and ``exterior'' $\capA_e$. Moreover, in this paper, we require the $C^{1,\alpha}$-regularity with $\alpha > s$ of hypersurfaces so that the fractional mean curvatures are finite everywhere.

The study of critical points or fractional minimal surfaces with boundary can be related to the classical problem on free boundary minimal surfaces in differential geometry. One of the main topics in the problem is to determine the shape of a manifold $\Sigma$ (embedded or immersed) in another smooth manifold $\capS$ such that $\Sigma$ minimizes its area in $\capS$ and $\partial \Sigma \subset \partial \capS$ with some topological constraints. The study of this classical problem was first considered by R. Courant in \cite{Courant40} in 1940 and, since then, a lot of authors have been intensively working on this topic. See, for instance, \cite{Lewy51, MeeYau80, Smyth84, Nitsche85, Ros08} for the detail. We also refer the readers to two surveys: \cite{Hildebrandt86} for classical works and \cite{Li20} for more recent results. The references here are obviously not exhausted. 

As an analogy of the classical free boundary minimal surfaces, it is natural to consider a fractional(nonlocal) version of free boundary minimal surfaces; however, the nonlocal version is not understood so far because, to our knowledge, suitable notions of fractional areas for manifolds with boundary had not been considered until Paroni, Podio-Guidugli, and Seguin in \cite{PPgS18} introduced the notion of $\Pers$ in \eqref{fractionalPeriPPgS}. To tackle the nonlocal version of the free boundary minimal surface problem, it is important to understand the geometric properties of critical points of $\Pers$.



Given the importance of critical points of $\Pers$ from the above perspective, it is desirable to develop some intuition about their geometric features. To do this, since it is quite difficult to have explicit solutions which entirely describe critical points or minimizers of $\Pers$, it is often convenient to study simplified cases in which the boundary of the critical points has some special characteristics. In this paper, we basically consider three cases: the first is that the boundary of critical points in $\mathR^N$ lies in a hyperplane and is homeomorphic to $\mathS^{N-2}$ (our result is also true if the boundary is not always homeomorphic to $\mathS^{N-2}$). The second is that the boundary is the union of two distinct parallel and co-axial manifolds each of which lies in a hyperplane, is homeomorphic to $\mathS^{N-2}$, and has distance of $d$ from another boundary. The last is that the distance $d$ is sufficiently large or sufficiently small.
  
Our first goal in this paper is to determine the shape of critical points of $\Pers$ whose boundary lies on a hyperplane. Precisely, we first define a set $\capC \subset \mathR^N$ as
\begin{equation}\label{boundaryCylinder}
	\capC \coloneqq \capG \times \mathR
\end{equation}
where $\capG$ is a non-empty bounded open subset of $\{x_N=0\}$ with a smooth boundary. Then we define an $(N-2)$-dimensional smooth manifold $\Gamma$ as
\begin{equation}\label{boundaryData}
	\Gamma_0 \coloneqq  \partial \capC \cap \{x_N=0\} \, (= \partial \capG \times \{0\}).
\end{equation}
Assume that $\capM \subset \mathR^N$ is an orientable compact $(N-1)$-dimensional $C^{1,\alpha}$ manifold with $\partial \capM = \Gamma_0$ and that $\capM$ is a critical point of $\Pers$. Note that the orientability of $\capM$ implies the orientability of $\partial \capM = \Gamma_0$. Then, as our first theorem, we aim to rigorously prove that $\capM$ must coincide with $\capC \cap \{x_N=0\}$, as we can intuitively expect this to be true. 

\begin{theorem}\label{theoremMinimalityHyperplane}
	Let $s \in (0,\,1)$. Let $\Gamma_0$ be as in \eqref{boundaryData}. Let $\capM$ be an orientable compact $(N-1)$-dimensional $C^{1,\alpha}$ manifold with $\partial \capM = \Gamma_0$. If $\capM$ is a critical point of $\Pers$ under normal variations, then $\capM$ is a hyperplane lying on $\{ x_N=0 \}$, i.e., 
	\begin{equation*}
		\capM = \overline{\capC} \cap \{x_N=0\} \, (= \overline{\capG} \times \{0\}). 
	\end{equation*}
\end{theorem}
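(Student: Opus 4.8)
The plan is to first reduce the statement to proving the containment $\capM \subseteq \{x_N = 0\}$. Indeed, once that is known, $\capM$ is a compact $(N-1)$-dimensional $C^{1,\alpha}$ submanifold with boundary of the hyperplane $\{x_N = 0\}$ whose manifold boundary is $\partial\capG\times\{0\}$, and this forces $\capM = \overline{\capG}\times\{0\}$. To prove the containment, I would argue by contradiction. Since $\partial\capM = \Gamma_0$ lies at height $x_N = 0$ and $\capM$ is compact, the numbers $t_0 := \max_{\capM} x_N$ and $t_1 := \min_{\capM} x_N$ satisfy $t_0 \ge 0 \ge t_1$ and are not both zero; the two cases being symmetric (reflect in $\{x_N = 0\}$, which fixes $\Gamma_0$ and sends a zero fractional mean curvature surface to another one), I may assume $t_0 > 0$ and pick $z_0 \in \capM$ attaining it. Because $t_0 > 0$ while $\partial\capM$ sits at height $0$, the point $z_0$ is an interior point of $\capM$, so the Euler--Lagrange equation \eqref{eulerLagrangeEqPPgS} applies at $z_0$ and gives $H_{\capM,s}(z_0) = 0$.

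Next I would compute $H_{\capM,s}(z_0)$ by subtracting off the flat configuration. Since $\capM \subseteq \{x_N \le t_0\}$, for any $y$ with $y_N > t_0$ the open segment from $z_0$ to $y$ stays in $\{x_N > t_0\}$ and hence misses $\capM$, so $(z_0,y)\notin\capX(\capM)$; as $\nu_{\capM}(z_0) = \pm e_N$, reading off \eqref{interior}--\eqref{exterior} then shows that the whole upper half-space $\{y_N > t_0\}$ is contained in $\capA_e(z_0)$ (I take $\nu_{\capM}(z_0) = e_N$; the opposite choice only swaps the roles of $\capA_i$ and $\capA_e$ and reverses all the inequalities below). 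The horizontal hyperplane through $z_0$ has $\capA_i = \{y_N < t_0\}$, $\capA_e = \{y_N > t_0\}$ and vanishing fractional mean curvature at $z_0$ by reflection symmetry across $\{x_N = t_0\}$; subtracting it leaves, by the inclusion just obtained,
\[
	H_{\capM,s}(z_0) \;=\; -\,2\,c_N \int_{Z} \frac{dy}{|y - z_0|^{N+s}}\,, \qquad Z := \capA_e(z_0)\cap\{y_N < t_0\}\,,
\]
where by \eqref{exterior} a point $y$ with $y_N < t_0$ lies in $Z$ precisely when the segment $[z_0;y]$ meets $\capM$ an odd number of times (here $\alpha > s$ guarantees the integral is convergent near $z_0$). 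Since $H_{\capM,s}(z_0) = 0$, we must have $|Z| = 0$: for a.e.\ $y$ in the half-space $\{y_N < t_0\}$, the segment $[z_0;y]$ crosses $\capM$ an even number of times.

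It remains to upgrade this almost-everywhere parity condition to the geometric statement $\capM \cap \{x_N < t_0\} = \emptyset$, which contradicts $t_0 > 0$ together with $\partial\capM\subseteq\{x_N = 0\}$ and hence forces $t_0 = 0$ (and, symmetrically, $t_1 = 0$), completing the proof. Suppose $\capM \cap \{x_N < t_0\}$ is nonempty and let $p$ be a point of it. If the segment $[z_0;p]$ were transversal to $\capM$ at $p$, then for all directions $\omega$ in a neighbourhood of $(p - z_0)/|p - z_0|$ the ray from $z_0$ in direction $\omega$ would cross $\capM$ transversally near $p$, hence cross $\capM$ an odd number of times on an interval of radii; by Fubini this would give $|Z| > 0$, a contradiction. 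Therefore $(p - z_0) \in T_p\capM$ for \emph{every} $p \in \capM\cap\{x_N < t_0\}$, so this (nonempty, relatively open) piece of the $C^1$ surface $\capM$ is ruled by segments of lines through $z_0$. A maximal such ruling segment cannot reach the height $x_N = t_0$: its endpoint there would lie on a line through $z_0$ that is either horizontal (impossible: the segment would then lie in $\{x_N = t_0\}$, not in $\{x_N < t_0\}$) or meets $\{x_N = t_0\}$ only at $z_0$, and $z_0\notin\overline{\capM\cap\{x_N<t_0\}}$ since a ruling direction approaching $z_0$ would have to be horizontal by $C^1$-regularity of $\capM$ at its maximum. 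Hence both endpoints of the ruling segment lie in $\partial\capM \subseteq \{x_N = 0\}$; but the line through $z_0$ containing it meets $\{x_N = 0\}$ in at most one point. This contradiction shows $\capM\cap\{x_N < t_0\} = \emptyset$.

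The crux of the argument is this last step: passing from the measure-theoretic identity $|Z| = 0$ produced by the Euler--Lagrange equation to the honest geometric containment $\capM\subseteq\{x_N\ge t_0\}$, which requires handling the null-set degeneracies of the radial projection from $z_0$ (the ruled-surface alternative, ruled out above using $C^1$-regularity at the maximum and the position of $\partial\capM$). By contrast, the computation of $H_{\capM,s}(z_0)$ is essentially a symmetrization and only needs the definitions \eqref{interior}--\eqref{exterior} to be unwound at the extremal point $z_0$, and the final passage from $\capM\subseteq\{x_N=0\}$ to $\capM = \overline{\capG}\times\{0\}$ is elementary.
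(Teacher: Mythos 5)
Your first half is essentially the paper's argument in a different guise: you evaluate the Euler--Lagrange equation at a height-extremal point $z_0$ with horizontal tangent plane, observe that one open half-space lies in $\capA_e(z_0)$, and cancel the two symmetric half-space contributions, reducing $H_{\capM,s}(z_0)$ to $-2c_N\int_Z |y-z_0|^{-N-s}\,dy$ with $Z=\capA_e(z_0)\cap\{y_N<t_0\}$ (the hypothesis $\alpha>s$ indeed makes this absolutely convergent, since near $z_0$ the set $Z$ is squeezed between $\capM$ and its tangent plane). Where you diverge is the endgame. The paper never needs the intermediate conclusion $|Z|=0$: because the extremal height is strictly on one side of the level of $\Gamma_0$, it exhibits an explicit positive-measure subset of the discrepancy region --- points far outside the convex hull of $\capM$ at heights strictly between $0$ and $t_0$ and close to $t_0$; for such $y$ the segment from $z_0$ stays strictly between the two levels, never meets the disk $\overline{\capG}\times\{0\}$, and therefore crosses $\capM$ an odd number of times (parity with respect to the mod-2 cycle $\capM\cup(\overline{\capG}\times\{0\})$, which is what the paper encodes via the cone $C_{\Gamma_0}$ and the reflection). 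This gives a strict sign for $H_{\capM,s}$ at the extremal point at once, and it is available to you verbatim: with $t_0>0$ these points lie in $Z$, contradicting $|Z|=0$ and rendering your entire third paragraph unnecessary.

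Your own route can be made to work, but the step you yourself call the crux is asserted rather than proved: from ``$(p-z_0)\in T_p\capM$ at every interior point of $\capM$ below height $t_0$'' you conclude that this piece is ruled by radial segments through $z_0$. For a hypersurface that is only $C^{1,\alpha}$ with $\alpha<1$ this is not automatic: the naive argument (write $\capM$ locally as $\{u=0\}$ and run Gronwall on $u$ along the radial flow) fails because $\nabla u$ is merely H\"older and the Osgood condition is violated. The statement is nonetheless true, since the field $x\mapsto x-z_0$ is smooth and tangency (in the Bouligand sense) to the locally closed set $(\capM\setminus\partial\capM)\cap\{x_N<t_0\}$ implies local flow-invariance by Nagumo's viability theorem (equivalently the Bony--Brezis invariance criterion); you should either invoke such a result explicitly or bypass the step with the direct positive-measure argument above. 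Two smaller points: your Fubini/transversality step tacitly uses Sard's theorem for $C^1$ maps between $(N-1)$-manifolds to discard non-transversal crossings, and your claim ``$z_0\notin\overline{\capM\cap\{x_N<t_0\}}$'' is stronger than what you need or establish --- what your secant argument does show, and what suffices, is that no ruling segment can terminate at $z_0$. With these repairs the proposal is correct; the endgame through maximal ruling segments ending on $\Gamma_0$ and the reduction to $\capM=\overline{\capG}\times\{0\}$ are sound.
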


Our second goal in this paper is to study the shape of critical points of $\Pers$ whose boundary consists of two disjoint components. The problem setting in the second theorem is as follows: we define two distinct compact $(N-2)$-dimensional smooth manifolds $\Gamma_1$ and $\Gamma_2$ by 
\begin{align}
	\Gamma_1 \coloneqq \partial \capC \cap \{x_N = h_1\} \quad \text{and} \quad \Gamma_2 \coloneqq \partial \capC \cap \{x_N = h_2\}, \label{boundary2}
\end{align}
where $\capC$ is as in \eqref{boundaryCylinder} and $h_1$ and $h_2$ are given constants with $h_2 < h_1 $. Then a critical point exhibit a different shape from a hyperplane. Precisely we prove
\begin{theorem}\label{theoremTwoHyperplanesNotMinimal}
	Let $s \in (0,\,1)$. Let $\Gamma_1$ and $\Gamma_2$ be as in \eqref{boundary2} and let $\capM$ be an orientable compact $(N-1)$-dimensional $C^{1,\alpha}$ manifold with $\partial \capM = \Gamma_1 \cup \Gamma_2$. Assume that $\capC$ is convex where $\capC$ is as in \eqref{boundaryCylinder}. If $\capM$ is a critical point of $\Pers$ under normal variations, then any connected component of $\capM$ is neither $C_1$ nor $C_2$ where we define
	\begin{equation*}
		C_1 \coloneqq \overline{\capC} \cap \{x_N=h_1\} \quad \text{and} \quad C_2 \coloneqq \overline{\capC} \cap \{x_N=h_2\}.
	\end{equation*}
	In particular, $\capM \neq C_1 \cup C_2$ 
	Moreover, $\capM \setminus \partial \capM$ does not intersect $\partial \capC = \partial \capG \times \mathR$.  
\end{theorem}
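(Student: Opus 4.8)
My plan is to argue by contradiction in every case, deriving the contradiction from the Euler--Lagrange equation \eqref{eulerLagrangeEqPPgS} — that is, from $H_{\capM,s}\equiv 0$ on $\capM\setminus\partial\capM$ — by evaluating $H_{\capM,s}$ at one carefully chosen interior point $z$ and showing that it is in fact strictly signed. The common mechanism is a nonlocal strong maximum principle, which I would isolate as a lemma: \emph{if $z\in\capM\setminus\partial\capM$, $\omega$ is a unit vector, the closed half-space $H^-:=\{x\in\mathR^N\mid (x-z)\cdot\omega\le 0\}$ contains $\capM$, and $z$ lies on $\Pi:=\partial H^-$, then $H_{\capM,s}(z)\ne 0$.} The ``soft'' half is easy: a segment issuing from $z$ into the open half-space $H^+$ has strictly positive $\omega$-coordinate after $z$, so its relative interior misses $\capM\subseteq\overline{H^-}$, whence for a.e.\ $y\in H^+$ one has $(z,y)\notin\capX(\capM)$; reading off \eqref{interior}--\eqref{exterior}, the whole of $H^+$ then lies in $\capA_i(z)$, or the whole of $H^+$ lies in $\capA_e(z)$, according to the orientation of $\capM$ at $z$. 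Consequently $\chi_{\capA_i(z)}-\chi_{\capA_e(z)}$ is squeezed between $\pm(\chi_{H^-}-\chi_{H^+})$, and since $z\in\Pi$ the principal value of $(\chi_{H^-}-\chi_{H^+})\,|y-z|^{-N-s}$ vanishes by the reflection across $\Pi$. Hence $H_{\capM,s}(z)$ has a definite sign, and it is nonzero as soon as the relevant inclusion $\capA_i(z)\subseteq H^-$ (resp.\ $\capA_e(z)\subseteq H^-$) fails on a set of positive measure.

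The strict half is the crux, and I would handle it by capping $\capM$ off with the two flat disks. Put $\Sigma:=\capM\cup C_1\cup C_2$; since $\partial\capM=\Gamma_1\cup\Gamma_2=\partial C_1\cup\partial C_2$, the $\mathZ/2$-class $[\Sigma]$ is a cycle in $\mathR^N$, hence $[\Sigma]=\partial[G]$ for a bounded set $G$, and for a generic segment the parity of the number of its crossings with $\capM$ is governed by the membership of its endpoints in $G$ together with the (easily computed) crossings with the two horizontal disks. Because $\partial G\subseteq\overline{H^-}$, the set $G$ does not meet $H^+$ and occupies the $-\omega$ side of $\Pi$ near $z$; chasing this through — accounting for the crossings of $[z;y]$ with $C_1,C_2$, which are trivial to control and which I can make vanish by choosing the height of $y$ off $h_1,h_2$ when that is possible — one finds, for $y$ far away in a suitable generic direction close to $-\omega$, that $[z;y]$ crosses $\capM$ an \emph{odd} number of times, i.e.\ $(z,y)\in\capX(\capM)$. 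By \eqref{interior}--\eqref{exterior} such a $y$, though lying in $H^-$, is placed in the same one of $\capA_i(z),\capA_e(z)$ as all of $H^+$, so $H^-$ is not entirely absorbed by the opposite set; and since transversality makes this happen for an open cone of $y$'s, carrying positive mass against $|y-z|^{-N-s}$, we conclude $H_{\capM,s}(z)\ne 0$. I expect the delicate points to be exactly this parity bookkeeping near the exceptional levels $h_1,h_2$ and the transversality underlying the phrase ``open cone of directions''; that is where the real work lies.

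\emph{Confinement.} Granting the lemma, testing it with $\omega=\pm e_N$ at a top- or bottom-most point of $\capM$ (interior, since $\partial\capM\subseteq\{x_N=h_1\}\cup\{x_N=h_2\}$) excludes any point of $\capM$ with $x_N\notin[h_2,h_1]$; testing with a horizontal $\omega$ at a maximiser of $x\cdot\omega$ over $\capM$ — which, should this maximum exceed $\max_{\partial\capC}x\cdot\omega=\max_{\partial\capM}x\cdot\omega$, is interior — excludes $\capM\not\subseteq\overline{\capC}$. Hence $\capM$ lies in the convex hull $\overline{\capG}\times[h_2,h_1]$ of $\Gamma_1\cup\Gamma_2$. \emph{Last assertion of the theorem.} If some $z\in(\capM\setminus\partial\capM)\cap\partial\capC$ existed, then $z\in\partial\capG\times\mathR$ with $z\notin\Gamma_1\cup\Gamma_2$, so the $x_N$-coordinate of $z$ is neither $h_1$ nor $h_2$; a supporting hyperplane of the convex cylinder $\capC$ at $z$ has the form $\Pi=\{(x-z)\cdot\omega=0\}$ with $\omega$ horizontal, and by confinement $\capM\subseteq\overline{\capC}\subseteq\overline{H^-}$ with $z\in\Pi$, so the lemma yields $H_{\capM,s}(z)\ne 0$, a contradiction. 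Thus $\capM\setminus\partial\capM$ does not meet $\partial\capC$.

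\emph{No component is $C_1$ or $C_2$.} Suppose a connected component of $\capM$ is $C_1$ (the case $C_2$ is identical, working at a bottom-most touching with $\omega=-e_N$ at level $h_2$). Choose $z$ in the interior of $C_1$ above a point of $\capG$ through which the vertical line is transverse to the remaining components $\capM\setminus C_1$. By confinement $\capM\subseteq\{x_N\le h_1\}$, so $\capM\subseteq\overline{H^-}$ for $\omega=e_N$ with $z\in\Pi=\{x_N=h_1\}$, and the lemma applies. Here the cycle simplifies, $[\capM]+[C_1]+[C_2]=[\capM\setminus C_1]+[C_2]$, and $z$ now lies \emph{outside} the corresponding set $G$ (it sits at the top level of the slab containing $G$); but the vertical segment through $z$ meets $C_2$ exactly once and misses $C_1$, so the parity count still outputs an odd number of crossings with $\capM$ for the far point $y$, and the conclusion $H_{\capM,s}(z)\ne 0$ goes through. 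This contradicts the Euler--Lagrange equation, so no connected component of $\capM$ is $C_1$ or $C_2$; in particular $\capM\neq C_1\cup C_2$.
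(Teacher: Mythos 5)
Your architecture is the same as the paper's: at a point where $\capM$ touches a supporting hyperplane $\Pi$ you invoke the Euler--Lagrange equation, use the reflection across $\Pi$ to get a one-sided bound on $H_{\capM,s}$ (your ``soft half'', which is correct and is exactly the paper's cancellation between $\capA_e\cap H^+$ and $\capA_i\subseteq H^-$), and then contradict criticality by producing a positive-measure imbalance inside $H^-$. Your confinement to the slab via extremal points, and the use of a supporting hyperplane of the convex cylinder for the last assertion, likewise mirror the paper's sliding of horizontal and vertical hyperplanes.

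The genuine gap is the strict half, which you yourself identify as ``where the real work lies'' and do not carry out. As isolated, your lemma cannot be true on its stated hypotheses: the flat disk $\overline{\capG}\times\{0\}$ of Theorem \ref{theoremMinimalityHyperplane} satisfies $\capM\subseteq\overline{H^-}$ with interior points on $\Pi$ and yet has vanishing fractional mean curvature, so the nonvanishing must be extracted from the two-level boundary structure, i.e.\ precisely from the parity bookkeeping you defer. (Also, your strictness criterion is stated with the inclusion inverted: what must fail on positive measure is $H^-\subseteq\capA_i(z)$, i.e.\ you need $|\capA_e(z)\cap H^-|>0$.) Moreover, in the one case you do detail (a component equal to $C_1$), the step ``$z$ now lies outside the corresponding set $G$'' is not justified by ``it sits at the top level of the slab'': you need, e.g., that the component $C_1$ has positive distance from $\capM\setminus C_1$, so that a short vertical ray from $z-\delta e_N$ meets no part of $(\capM\setminus C_1)\cup C_2$ and hence $z-\delta e_N\notin G$; without some such argument the asserted odd crossing number for nearly vertical far points is not forced, and in general which cone of directions delivers odd parity (nearly vertical segments crossing $C_2$ once, versus nearly horizontal segments missing both caps) depends on the configuration. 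The same unfinished bookkeeping is needed for the slab and cylinder confinement (segments issued from points above $h_1$ or on $\partial\capC$). The paper sidesteps the homological machinery by exhibiting explicit imbalance regions — a ball $B_{1/2}(-\lambda e_N)$ far below $C_2$ inside the cone through $\Gamma_2$, and, in Theorem \ref{theoremMinimalityHyperplane}, a ball at the touching level outside the cone through the boundary. Your mod-$2$ capping argument can be completed to produce the same regions, but as written the decisive step is asserted, not proved.
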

We remark that, by using a cone whose boundary is $\Gamma_1 \cup \Gamma_2$ as in Theorem \ref{theoremTwoHyperplanesNotMinimal} with $h_1=1$ and $h_2=-1$, we can further detect how the critical points behave. See Subsection \ref{subsectionFurtherStudy} of Section \ref{sectionProofTheorem2} for the detail. 
%


Our third goal is to further study the shape and, in particular, the topology of critical points of $\Pers$ in the same situation as the one in Theorem \ref{theoremTwoHyperplanesNotMinimal}. Precisely, taking $\Gamma_1$ and $\Gamma_2$ as in Theorem 1.2 with $d \coloneqq h_1-h_2>0$, we will see what critical points of $\Pers$ under normal variations look like in terms of connectedness if $d$ is sufficiently large or sufficiently small.

To reach the third goal, we first show the following lemma which somehow tells us how different critical points are from hyperplanes.
\begin{lemma}\label{theoremPoppingCriticalPtwithTwoBdry}
	Let $s \in (0,\,1)$ and $d>0$. Let $\Gamma_1$ and $\Gamma_2$ be as in \eqref{boundary2} with $h_1 = 0$ and $h_2 = -d$. Assume that $\capC$ is convex where $\capC$ is as in \eqref{boundaryCylinder}. Then there exists a constant $\varepsilon_0 > 0$, depending only on $N$, $s$, and $d$, such that the following holds: let $\capM$ be an orientable compact $(N-1)$-dimensional $C^{1,\alpha}$ manifold with $\partial \capM = \Gamma_1 \cup \Gamma_2$. If $\capM$ is a critical point of $\Pers$ under normal variations, then a set enclosed by $\capM$ and the union of $\capC \cap \{x_N=0\}$ and $\capC \cap \{x_N=-d\}$ contains two half-balls 
	\begin{equation*}
		B^{-}_{\varepsilon_0}(0) \coloneqq \{x \in B_{\varepsilon_0}(0) \mid x_N < 0 \}  \quad \text{and} \quad B^{+}_{\varepsilon_0}(p_{d}) \coloneqq \{x \in B_{\varepsilon_0}(p_{d}) \mid x_N > -d \}
	\end{equation*}
	where $p_{d} \coloneqq (0,\,-d) \in \mathR^{N-1} \times \mathR$.
\end{lemma}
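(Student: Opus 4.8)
The plan is as follows.

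\medskip\noindent\emph{Step 1: reduction to a distance estimate.} Since $\capM$ is a compact embedded $C^{1,\alpha}$ hypersurface with $\partial\capM=\Gamma_1\cup\Gamma_2=\partial C_1\cup\partial C_2$, where $C_1\coloneqq\overline{\capC}\cap\{x_N=0\}$ and $C_2\coloneqq\overline{\capC}\cap\{x_N=-d\}$, the union $\Sigma\coloneqq\capM\cup C_1\cup C_2$ is a closed embedded $(N-1)$-hypersurface, so by Jordan--Brouwer it bounds a bounded open set $\Omega$; this $\Omega$ is the enclosed set of the statement. I claim it suffices to produce $\varepsilon_0=\varepsilon_0(N,s,d)\in(0,d)$ — with $\varepsilon_0$ also small enough that the $(N-1)$-ball $B'_{\varepsilon_0}(0')$ (the prime denoting the projection to $\mathR^{N-1}$) lies in $\capG$ — such that $\capM\cap B_{\varepsilon_0}(0)=\emptyset$ and $\capM\cap B_{\varepsilon_0}(p_d)=\emptyset$. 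Indeed, given such $\varepsilon_0$, for $q\in B^{-}_{\varepsilon_0}(0)$ the upward half-line $\{q+te_N:t\ge0\}$ (with $e_N=(0,\dots,0,1)$) meets $\Sigma$ exactly once: it misses $C_2\subset\{x_N=-d\}$ since its height stays $>-\varepsilon_0>-d$; it misses $\capM\subset\{x_N\le0\}$ since the part with $x_N\le0$ is the segment from $q$ to $(q',0)$, which lies in $B_{\varepsilon_0}(0)\subset\capM^c$; and it crosses $C_1$ exactly once, at $(q',0)$, because $q'\in B'_{\varepsilon_0}(0')\subset\capG$. A half-line from $q$ to infinity crossing $\Sigma=\partial\Omega$ an odd number of times forces $q\in\Omega$; thus $B^{-}_{\varepsilon_0}(0)\subset\Omega$, and $B^{+}_{\varepsilon_0}(p_d)\subset\Omega$ follows symmetrically with downward half-lines.

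\medskip\noindent\emph{Step 2: confinement.} By the nonlocal comparison principle — sliding the half-spaces $\{x_N\ge t\}$, $\{x_N\le-d-t\}$ and, using the convexity of $\capC$, the cylinder $\partial\capC$ against $\capM$, as in the proof of Theorem~\ref{theoremTwoHyperplanesNotMinimal} — one gets $\capM\subset\overline{\capC}\cap\{-d\le x_N\le0\}$, with $\capM\cap\{x_N=0\}=\Gamma_1$, $\capM\cap\{x_N=-d\}=\Gamma_2$, and $(\capM\setminus\partial\capM)\cap\partial\capC=\emptyset$ (the last being part of Theorem~\ref{theoremTwoHyperplanesNotMinimal}). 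The mechanism: at an interior point where $\capM$ touched a supporting half-space, the ``interior'' set $\capA_i$ would be strictly contained in the tangent half-space, so $H_{\capM,s}<0$ there, against $H_{\capM,s}\equiv0$. In particular $\Omega\subset\overline{\capC}\cap\{-d\le x_N\le0\}$, so the half-line argument of Step~1 is legitimate, and $\capM$ lies in a slab of width $d$.

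\medskip\noindent\emph{Step 3: the distance estimate (the heart).} I argue for $0$ (the point $p_d$ being symmetric) by contradiction: suppose $\capM\cap B_{\varepsilon_0}(0)\ne\emptyset$ and let $z\in\capM$ realize $r\coloneqq\dist(\capM,0)<\varepsilon_0$. Taking $\varepsilon_0<\dist(0,\partial\capM)$, $z$ is an interior point, so $H_{\capM,s}(z)=0$. Since $\capM\cap B_r(0)=\emptyset$, the sphere $\partial B_r(0)$ is tangent to $\capM$ at $z$ from outside; orient $\capM$ so that $\nu_{\capM}(z)=z/r$. Reading off \eqref{interior}--\eqref{exterior}, one checks: (a) $B_r(0)\subset\capA_i(z)$; (b) a definite portion of $\{x_N<-d\}\cap(\capG\times\mathR)$ lies in $\capA_i(z)$ — a nearly vertical segment from $z$ down into it crosses $\capM$ an odd number of times while the normal condition selects the interior; (c) away from the bounded ``error'' sets below, $\capA_e(z)$ is contained in the slab $\{-d\le x_N\le0\}$. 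Writing $H_{\capM,s}(z)$ as a principal value and subtracting the vanishing contribution of the tangent half-space $\mathbb{H}_z\coloneqq\{y:(y-z)\cdot\nu_{\capM}(z)<0\}$ gives the absolutely convergent identity
\[
0=H_{\capM,s}(z)=2c_N\!\left(\int_{\capA_i(z)\setminus\mathbb{H}_z}\frac{dy}{|y-z|^{N+s}}-\int_{\mathbb{H}_z\setminus\capA_i(z)}\frac{dy}{|y-z|^{N+s}}\right).
\]
By (b) and $|z_N|<\varepsilon_0<d$, the first integral is $\ge c_1>0$ with $c_1=c_1(N,s,d)$ (here one uses that $\capG$ contains a fixed ball about $0'$, part of the standing setup). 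The second integral, together with the errors — (i) the contribution of $\capA_i(z)\triangle\mathbb{H}_z$ near $z$, bounded via $C^{1,\alpha}$-regularity with $\alpha>s$ and uniform interior estimates for $H_{\capM,s}=0$ on $B_{r/2}(z)$ (which control $[\nu_{\capM}]_{C^\alpha}$ near $z$ by $C(N,s)\,d^{-\alpha}$, making this term $\lesssim d^{-s}(r/d)^{\alpha-s}$), and (ii) the ``shadows'' of $\capM$ inside $\mathbb{H}_z$, which shrink as $r\to0$ because $\capM$ flattens near the axis — together sum to $o_{r\to0}(1)$. Hence for $r$ below a threshold $\varepsilon_0(N,s,d)$ the right-hand side is strictly positive, a contradiction. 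Therefore $\dist(\capM,0)\ge\varepsilon_0$, and likewise $\dist(\capM,p_d)\ge\varepsilon_0$, which with Step~1 completes the proof.

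\medskip\noindent The main obstacle is Step~3: identifying $\capA_i(z),\capA_e(z)$ precisely enough (they differ from $\Omega,\Omega^c$ exactly because $\capM$ has boundary, so straight segments ``see through'' the flat caps $C_1,C_2$), and, above all, controlling the near-$z$ error term by a constant depending only on $N,s,d$, which requires uniform interior $C^{1,\alpha}$ (or smoothness) estimates for critical points of $\Pers$ confined to a slab.
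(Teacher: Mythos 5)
Your Steps 1--2 are fine (they amount to the confinement already contained in Theorem \ref{theoremTwoHyperplanesNotMinimal} plus a routine Jordan--Brouwer parity argument), but Step 3 --- the heart of your argument --- has a genuine gap, and it is exactly the point where the paper's proof takes a different route. Your contradiction requires the error term $\int_{\mathbb{H}_z\setminus\capA_i(z)}|y-z|^{-N-s}\,dy$ (plus the near-$z$ discrepancy $\capA_i(z)\triangle\mathbb{H}_z$) to be $o_{r\to0}(1)$ \emph{uniformly over all critical points} $\capM$, since $\varepsilon_0$ may depend only on $N,s,d$. Nothing provides this. First, the ``uniform interior estimates for $H_{\capM,s}=0$'' that you invoke to bound $[\nu_{\capM}]_{C^\alpha}$ near $z$ do not exist in the paper (and are not available in the literature for critical hypersurfaces with boundary of $\Pers$, as opposed to minimizing boundaries of sets in the Caffarelli--Roquejoffre--Savin setting); the hypothesis is only qualitative $C^{1,\alpha}$ regularity, so the near-$z$ term has no uniform bound. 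Second, and more fundamentally, even granting local regularity at scale $r$ around $z$, the set $\mathbb{H}_z\setminus\capA_i(z)$ is governed by the geometry of $\capM$ at intermediate scales (between $r$ and $1$), which is completely uncontrolled: for instance, if the sheet of $\capM$ through $z$ rises from depth $\approx r$ to height $\approx 0$ over a horizontal scale comparable to $r$, the thin ``exterior'' region trapped between the tangent hyperplane $\{x_N=-r\}$ and that sheet contributes an amount of order $r^{-s}$, and even with a unit-scale $C^{1,\alpha}$ graph bound it only gives an $O(1)$ quantity, not $o_{r\to0}(1)$ and not small compared with your gain $c_1(N,s,d)\sim d^{-s}$. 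So the identity $0=\int_{\capA_i\setminus\mathbb{H}_z}-\int_{\mathbb{H}_z\setminus\capA_i}$ yields no contradiction: both sides can be large and balance. (There are also secondary unaddressed issues: the normal $\nu_{\capM}(z)=z/r$ at the closest point need not point downwards, and the parity count putting $\{x_N<-d\}$ into $\capA_i(z)$ depends on the unknown global structure of $\capM$, e.g.\ possible closed components.)

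The paper's proof is designed precisely to avoid needing any quantitative information about $\capM$: it builds an explicit graph barrier (a shallow bump $w_\varepsilon$ together with a far bottom disk), computes that its fractional mean curvature is only bounded below by $-c\,\phi(\varepsilon)^s$, and then makes it \emph{strictly} negative by removing an auxiliary side bump $v_\varepsilon$, whose nonlocal effect contributes $+c''\phi(\varepsilon)^{\beta}$ with $\beta<s$. Sliding this barrier and comparing at a first touching point uses only the inclusion $\capA_e^{\capM^{\varepsilon,t}}(q)\subset\capA_e^{\capM}(q)$ (a sign), never an estimate on $\capM$ itself. To rescue your closest-point strategy you would have to either prove the uniform interior regularity you assumed, or replace the tangent hyperplane by a comparison hypersurface with strictly signed fractional curvature --- which is essentially the paper's barrier argument.
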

To favor the intuition, a sketch of our critical points is given in Figure \ref{figureTheoremBump}.

\begin{figure}[h]
\begin{center}
		\includegraphics[keepaspectratio, scale=0.55]{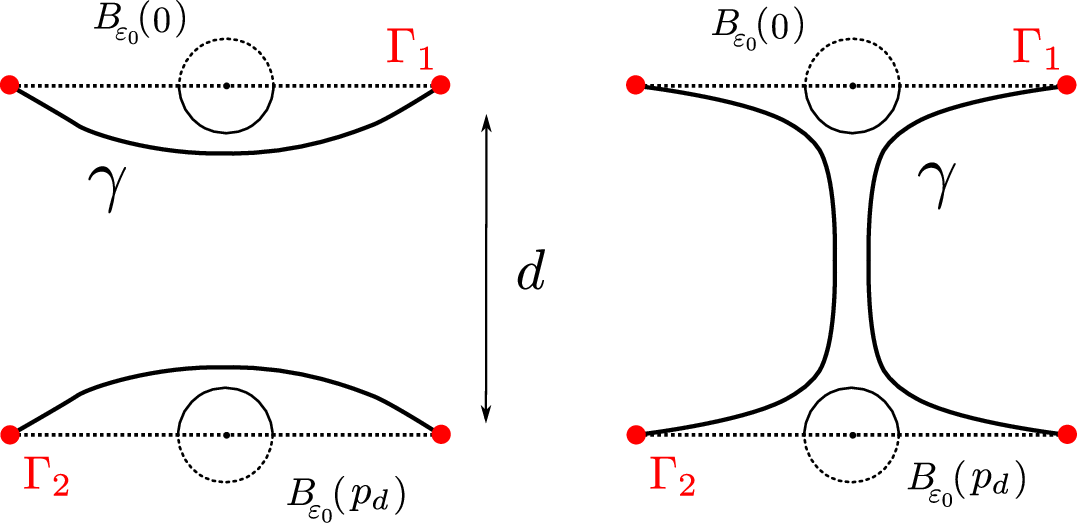}
\end{center}
	\caption{Two possible situations in dimension 2 in Theorem \ref{theoremPoppingCriticalPtwithTwoBdry} in which the `interior'' or ``exterior'' of the critical point $\gamma$ with $\partial \gamma = \Gamma_1 \cup \Gamma_2$ contains two half-balls.}
	\label{figureTheoremBump}
\end{figure}

As a result of Lemma \ref{theoremPoppingCriticalPtwithTwoBdry}, we prove that, if the distance $d$ between two parallel and co-axial boundaries is sufficiently small, then any critical point is connected in the sense that the two boundaries are in the same connected component when $N \geq 3$. Moreover, when $N=2$, any critical point is disconnected and its two distinct connected components should look like the right-hand side of Figure \ref{figureTheoremBump} with $0< d \ll 1$. 

Precisely, our third theorem is as follows.
\begin{theorem}\label{theoremConnectednessCriticalPtsN3}
	Let $s \in (0,\,1)$. Let $\Gamma_1$ and $\Gamma_2$ be as in Lemma \ref{theoremPoppingCriticalPtwithTwoBdry}. Assume that $\capC$ is convex where $\capC$ is as in \eqref{boundaryCylinder}. Then there exists $d_0=d_0(N,s)>0$ such that the following holds: for any $d \in (0,\, d_0)$, we take any orientable compact $(N-1)$-dimensional $C^{1,\alpha}$ manifold $\capM \subset \mathR^N$ with $\partial \capM = \Gamma_1 \cup \Gamma_2$. If $\capM$ is a critical point of $\Pers$ under normal variations, then $\Gamma_1$ and $\Gamma_2$ are in the same connected component of $\capM$ if $N \geq 3$ and $\capM$ is disconnected if $N=2$.
	
	Moreover, when $N=2$, there exist two distinct connected components $\capM_1$ and $\capM_2$ of $\capM$ such that $\dist(\capM_1, \capM_2) \geq c$ with some constant $c>0$, depending only on $N$ and $s$, and $\partial \capM_i$ intersects both $\Gamma_1$ and $\Gamma_2$ for each $i \in \{1,2\}$.  
\end{theorem}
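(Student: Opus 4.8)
\emph{Overall approach and normalization.} I would treat $N\ge 3$ by contradiction and $N=2$ by eliminating the ``wrong'' topological configurations, the common engine being Lemma~\ref{theoremPoppingCriticalPtwithTwoBdry}, together with Theorem~\ref{theoremTwoHyperplanesNotMinimal} (no interior point of $\capM$ meets $\partial\capC$, no component is a flat slice) and the scale invariance of the equation $H_{\capM,s}\equiv 0$. First, since $H_{\lambda\capM,s}(\lambda z)=\lambda^{-s}H_{\capM,s}(z)$ and the datum $\Gamma_1\cup\Gamma_2$ rescales to one of the same type (two parallel copies of $\partial(\capG/d)$ at mutual distance $1$), after replacing $\capM$ by $\capM/d$ we may assume $d=1$ with $\capG$ an arbitrary bounded convex open subset of $\{x_N=0\}$ containing $0$. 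Applying Lemma~\ref{theoremPoppingCriticalPtwithTwoBdry} in this normalization, and using that the constant there does not depend on $\capG$, we obtain a fixed $\varepsilon_\ast=\varepsilon_\ast(N,s)>0$ and a connected chamber $\Omega$ with $\partial\Omega\subseteq\capM\cup(\capC\cap\{x_N=0\})\cup(\capC\cap\{x_N=-1\})$ and $\Omega\subseteq\capM^c$ that contains $B^-_{\varepsilon_\ast}(0)$ and $B^+_{\varepsilon_\ast}(p_1)$ with $p_1=(0,-1)$; undoing the scaling, $\varepsilon_0=\varepsilon_\ast d$. The point is that for $d$ small these half-balls sit near the common axis, well inside $\capC$, so that $\Omega$ reaches, within $\capM^c$, both just below $D_0:=\overline{\capC}\cap\{x_N=0\}$ near the axis and just above $D_{-d}:=\overline{\capC}\cap\{x_N=-d\}$ near the axis.

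\emph{The case $N\ge 3$.} Suppose $\Gamma_1$ and $\Gamma_2$ lie in distinct components of $\capM$. Since $\Gamma_i\cong\mathS^{N-2}$ is connected (this is the only place $N\ge 3$ is really used), $\Gamma_i$ lies in a single component $\capM_i$, and by Theorem~\ref{theoremTwoHyperplanesNotMinimal} no interior point of $\capM_i$ meets $\partial\capC$, hence none meets $\Gamma_j$ ($j\ne i$); therefore $\capM_1\cap\capM_2=\emptyset$, $\partial\capM_1=\Gamma_1$, $\partial\capM_2=\Gamma_2$ (any remaining component would be a closed hypersurface of vanishing fractional mean curvature, ruled out by touching it from outside with a large sphere). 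Then $\Sigma_i:=\capM_i\cup(\overline{\capC}\cap\{x_N=h_i\})$ is a compact topological hypersurface (with corners along $\Gamma_i$) and bounds a bounded region $E_i$ by Jordan--Brouwer; moreover $\Gamma_2$ is disjoint from $\Sigma_1$ (different height, plus Theorem~\ref{theoremTwoHyperplanesNotMinimal}), so $\Gamma_2$ lies entirely inside $E_1$ or entirely outside, and symmetrically for $\Gamma_1$ and $E_2$. Now feed in $\Omega$: a path in $\Omega$ joins a point just below $D_0$ near the axis to a point just above $D_{-d}$ near the axis while avoiding $\capM$; tracking on which sides of $\Sigma_1$ and of $\Sigma_2$ the two endpoints lie, and using that for $d$ small the half-balls force $\capM_1$ to protrude to the $\{x_N<0\}$-side of $D_0$ near the axis and $\capM_2$ to protrude to the $\{x_N>-d\}$-side of $D_{-d}$ near the axis, a separation/degree count forces $\capM_1\cap\capM_2\ne\emptyset$ --- a contradiction. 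Hence $\Gamma_1$ and $\Gamma_2$ lie in the same component.

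\emph{The case $N=2$.} Here $\partial\capM$ is four points, so $\capM$ is a disjoint union of two arcs and finitely many closed loops, and the loops are excluded as above; the two arcs pair the four points either as ``both horizontal'', ``both vertical'', or ``crossing''. In the ``both horizontal'' and ``crossing'' cases, the same half-ball information --- each arc protrudes below $D_0$ and above $D_{-d}$ near the axis while its endpoints are anchored at the two heights --- forces the two arcs to cross, by the intermediate value theorem applied to the $x_1$-coordinate along each arc, which is impossible since they are disjoint components. Therefore the pairing is ``both vertical'': $\capM$ is disconnected into two arcs $\capM_1,\capM_2$, each joining a point of $\Gamma_1$ to a point of $\Gamma_2$. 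For the quantitative separation, when $d$ is small each $\capM_i$ is a curve of vanishing fractional mean curvature spanning a height gap $d$ between two points of $\partial\capC$; a barrier argument (comparison with suitable sub/supersolutions, in the spirit of the proof of Lemma~\ref{theoremPoppingCriticalPtwithTwoBdry}) confines $\capM_i$ to an $O(d)$-neighbourhood of the corresponding vertical line of $\partial\capC$, and since these two lines are a fixed positive distance apart we obtain $\dist(\capM_1,\capM_2)\ge c$.

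\emph{Main obstacle.} The crux is Step~2 (the case $N\ge 3$): converting the soft statement ``the enclosed chamber is connected and reaches past both flat disks near the axis'' into the hard statement ``the two caps must intersect'' requires a Jordan--Brouwer/degree argument robust under the merely $C^{1,\alpha}$ regularity of $\capM$ and under the fact that $\capM_1,\capM_2$ may be very wavy away from the axis, and one must pin down exactly how the smallness of $d$ enters (through $\varepsilon_0=\varepsilon_\ast d$ being small relative to the geometry of $\capG$). A secondary difficulty is making the barrier estimate of the last step quantitative enough to yield a distance bound depending only on $N$ and $s$.
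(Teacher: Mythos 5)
Your proposal assembles the right ingredients (Lemma \ref{theoremPoppingCriticalPtwithTwoBdry}, Theorem \ref{theoremTwoHyperplanesNotMinimal}, a topological argument), but the way you invoke Lemma \ref{theoremPoppingCriticalPtwithTwoBdry} discards exactly the quantitative information the theorem needs, and the central topological step is left unproved. By rescaling to $d=1$ and scaling back you only obtain half-balls of radius $\varepsilon_0=\varepsilon_\ast d$, i.e.\ information at scale $d$. The paper instead exploits the structure of the proof of the lemma: for $d$ below a threshold depending only on $N$ the barrier parameter $\varepsilon_2$ there can be chosen \emph{independently of $d$}, and sliding the barrier from above and below all the way to $t=0$ shows that $\capM$ must avoid a cylinder around the axis of \emph{fixed} radius, so that $\capM\subset\{\varepsilon_2\le|x'|\le 1,\,-d\le x_N\le 0\}$. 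This $d$-independent excluded core drives everything: for $N\ge 3$ the containing region deformation-retracts to $\Gamma_1\simeq\mathS^{N-2}$, and if some component $\capM_0$ had $\partial\capM_0=\Gamma_1$ then $[\Gamma_1]$ would be null-homologous via $\capM_0$ (exact sequence of the pair $(\capM_0,\Gamma_1)$) while it generates $H_{N-2}$ of the retract, a contradiction; for $N=2$ the excluded strip $\{|x_1|<\varepsilon_2\}$ immediately disconnects $\capM$ into a left and a right part at mutual distance at least $\varepsilon_2=c(N,s)$, each a compact $1$-manifold whose boundary is one point of $\Gamma_1$ and one of $\Gamma_2$, hence meeting both.

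With only the scale-$d$ half-balls, your key step for $N\ge3$ --- ``a separation/degree count forces $\capM_1\cap\capM_2\ne\emptyset$'' --- cannot be completed: nothing you have established excludes two disjoint caps, $\capM_1$ spanning $\Gamma_1$ and dipping roughly $\varepsilon_\ast d<d/2$ below $\{x_N=0\}$ near the axis, and $\capM_2$ spanning $\Gamma_2$ and rising roughly $\varepsilon_\ast d$ above $\{x_N=-d\}$; this is fully compatible with the half-ball inclusions (note also that the lemma does not assert the two half-balls lie in one \emph{connected} chamber, as your argument assumes). You flag this step yourself as the main obstacle, and the missing idea is precisely that the smallness of $d$ enters through $\varepsilon_0$ being bounded below independently of $d$ (hence large compared with $d$), not through $\varepsilon_0=\varepsilon_\ast d$. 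Likewise, in $N=2$ your elimination of the ``horizontal''/``crossing'' pairings by an intermediate-value argument and the $O(d)$-confinement barrier for the distance bound are asserted rather than proved; in the paper both the pairing and the bound $\dist(\capM_1,\capM_2)\ge c(N,s)$ follow at once from the excluded middle strip.
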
 



As a counterpart of Theorem \ref{theoremConnectednessCriticalPtsN3}, we prove that, if the distance $d$ between two parallel and co-axial boundaries is sufficiently large, then any critical point is disconnected in any dimensions and it should look like the left-hand side of Figure \ref{figureTheoremBump} with $d \gg 1$.

Our last theorem is as follows.
\begin{theorem}\label{theoremDisconnectednessCriticalPts}
	 Let $s \in (0,\,1)$. Let $\Gamma_1$ and $\Gamma_2$ be as in Lemma \ref{theoremPoppingCriticalPtwithTwoBdry}. Assume that $\capC$ is convex where $\capC$ is as in \eqref{boundaryCylinder}. Then there exists $d_1=d_1(N,s)>0$ such that the following holds: we assume that, for any $d >d_1$, $\capM \subset \mathR^N$ is any orientable compact $(N-1)$-dimensional $C^{1,\alpha}$ manifold with $\partial \capM = \Gamma_1 \cup \Gamma_2$. If $\capM$ is a critical point of $\Pers$ under normal variations, then $\capM$ is disconnected. 
	 
	 Moreover, there exist two disjoint connected components $\capM_1$ and $\capM_2$ of $\capM$ such that $\partial \capM_i = \Gamma_i$ for any $i \in \{1,\,2\}$.
\end{theorem}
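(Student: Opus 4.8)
The plan is to argue by contradiction, combining the confinement coming from Theorem~\ref{theoremTwoHyperplanesNotMinimal} and Lemma~\ref{theoremPoppingCriticalPtwithTwoBdry} with a pointwise estimate of the fractional mean curvature along the lines of the cone comparison in Subsection~\ref{subsectionFurtherStudy}. Suppose, for a contradiction, that for $d$ arbitrarily large there is a critical point $\capM$ with $\partial\capM=\Gamma_1\cup\Gamma_2$ having a connected component $\capM_0$ whose boundary meets both $\Gamma_1\subset\{x_N=0\}$ and $\Gamma_2\subset\{x_N=-d\}$. As $\partial\capM=\Gamma_1\cup\Gamma_2$, this is the only configuration not immediately yielding the conclusion: if no component meets both hyperplanes, every component has its boundary inside a single hyperplane, and since the boundary of such a component is a compact boundaryless $(N-2)$-submanifold of the connected manifold $\Gamma_i$ (for $N=2$ one argues by a parity count), it is empty or equal to $\Gamma_i$; hence there are distinct components $\capM_1,\capM_2$ with $\partial\capM_i=\Gamma_i$, necessarily disjoint because $\Gamma_1\cap\Gamma_2=\emptyset$, and $\capM$ is disconnected as claimed.

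Assume then that such a spanning $\capM_0$ exists. By Theorem~\ref{theoremTwoHyperplanesNotMinimal} (the interior of $\capM_0$ avoids $\partial\capC=\partial\capG\times\mathR$) together with Lemma~\ref{theoremPoppingCriticalPtwithTwoBdry} (which pins the enclosed region inside the tube), $\capM_0$ lies in the closed cylinder $\overline{\capG}\times\mathR$, of horizontal diameter $D=\diam\capG$ independent of $d$; being connected with boundary points of heights $0$ and $-d$, it meets every hyperplane $\{x_N=t\}$, $t\in[-d,0]$. Fix $z_0\in\capM_0$ with $x_N(z_0)=-d/2$ and split the integral defining $H_{\capM_0,s}(z_0)$ into the two half-spaces $\{x_N>-d/4\}$, $\{x_N<-3d/4\}$ and the central slab $S_d:=\{-3d/4\le x_N\le-d/4\}$. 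The two half-spaces are at distance $d/4$ from $z_0$, so, using $|\chi_{\capA_i(z_0)}-\chi_{\capA_e(z_0)}|\le1$, they contribute $O_N(d^{-s})$. Inside $S_d$ the whole of $\capM$ sits in the bounded cylinder $\overline{\capG}\times[-3d/4,-d/4]$, so the part of $S_d$ outside that cylinder belongs entirely to $\capA_i(z_0)$ or to $\capA_e(z_0)$; inspecting \eqref{interior}--\eqref{exterior} with the orientation given by the outward normal of the region enclosed by $\capM_0$ and the two flat disks at heights $0$, $-d$, it belongs to the exterior, and since $\int_{S_d\setminus(\overline{\capG}\times\mathR)}|y-z_0|^{-N-s}\,dy$ increases to $c_\infty(z_0)\asymp\dist(z_0,\partial\capG)^{-s}>0$ as $d\to\infty$, we get
\begin{equation*}
	\frac{H_{\capM_0,s}(z_0)}{c_N}\le -\,c_\infty(z_0)+\mathrm{p.v.}\!\int_{S_d\cap(\overline{\capG}\times\mathR)}\frac{\chi_{\capA_i(z_0)}-\chi_{\capA_e(z_0)}}{|y-z_0|^{N+s}}\,dy+o_d(1).
\end{equation*}
Thus everything reduces to bounding the principal--value integral over the bounded set $S_d\cap(\overline{\capG}\times\mathR)$ by a $d$--independent quantity smaller than $c_\infty(z_0)$.

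That integral is the only place where the missing information bites: it carries the singularity at $z_0$ together with a ``bulk'' part over a fixed-width cylinder, and without an a priori $C^{1,\alpha}$ bound on $\capM$ or control of its shape near $z_0$ it is a priori of order one and of undetermined sign. I would resolve this, exactly as in the cone computation of Subsection~\ref{subsectionFurtherStudy}, by running the estimate not at $z_0$ but at a contact point $z_0'$ of an explicit barrier: slide the cone $K$ with vertex on the axis at height $-d/2$ and boundary $\Gamma_1\cup\Gamma_2$ (equivalently, inflate a ball of fixed radius inside the region enclosed by $\capM_0$ and the two flat disks, restricted to $S_d$) until it first touches $\capM_0$. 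For $d$ large this first contact occurs at an interior point of the slab, because $\capM_0$ must cross the thin central region occupied by $K$ while, by Theorem~\ref{theoremTwoHyperplanesNotMinimal}, it cannot meet $\Gamma_1\cup\Gamma_2$ there. At $z_0'$ the two surfaces are tangent and the ``interior'' set of $\capM_0$ is contained in (respectively, contains) that of $K$; the monotonicity of $H_{\cdot,s}$ under set inclusion at a common tangency point then transfers all local control to the smooth surface $K$, whose fractional mean curvature near its vertex is strictly negative with a bound that does not degenerate as $d\to\infty$. Together with the negligible far terms this gives $H_{\capM_0,s}(z_0')\neq0$, contradicting \eqref{eulerLagrangeEqPPgS}.

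The main obstacle is precisely this last point --- obtaining a $d$--independent estimate of $|H_{\capM_0,s}|$ at a suitable point from the structural hypotheses alone --- and the device of working at a barrier--contact point, where monotonicity of the fractional mean curvature fixes both the size and the sign of the local contribution, is what makes it work. Largeness of $d$ is used twice: to reduce the coupling between the two boundary components to order $d^{-s}$, and to force a spanning $\capM_0$ to dip into the thin central region the barrier occupies. (This matches Theorem~\ref{theoremConnectednessCriticalPtsN3}: when $d$ is small the coupling term has size $d^{-s}\gg1$ and dominates, so no contradiction can or should be derived.) Once the contradiction is reached, the bookkeeping of the first paragraph produces the two disjoint components $\capM_1,\capM_2$ with $\partial\capM_i=\Gamma_i$, completing the proof of Theorem~\ref{theoremDisconnectednessCriticalPts}.
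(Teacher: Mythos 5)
Your reduction to a ``spanning'' component, the confinement via Theorem \ref{theoremTwoHyperplanesNotMinimal}, and the splitting of $H_{\capM,s}$ into far half-spaces of size $O(d^{-s})$ plus a slab term are sound and in the same spirit as the paper. But the step you yourself flag as the crux is not closed, and the barrier device you propose does not close it. First, the localization of the contact point fails: a connected component joining $\Gamma_1$ and $\Gamma_2$ must meet every hyperplane $\{x_N=t\}$, but it can do so at any radius up to $1$, e.g.\ hugging (without touching) the lateral boundary $\partial\capC$; nothing forces it to ``dip into the thin central region'' occupied by a cone with vertex on the axis, so the first contact with your barrier need not occur where you claim curvature control, and contact with a fixed-radius ball near the axis need not occur at all. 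Second, the quantitative input you invoke is not available: the uniform-in-$d$ strictly negative bound for the fractional mean curvature of the thin cone near its vertex is asserted, not proved (Remark \ref{remarkCriticalPts2D} only gives a sign, in dimension $2$, with no magnitude), and for the ``inflate a ball of fixed radius $\rho$'' variant the tangency monotonicity goes the wrong way: with the ball inside $\capA_i^{\capM}(q)$ one only gets $H_{\capM,s}(q)\geq -c\,\rho^{-s}$, which is perfectly compatible with $H_{\capM,s}(q)=0$. In short, the $d$-independent sign-definite term cannot come from the barrier's own curvature at the contact point.

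The paper resolves exactly these two issues by a different choice of barrier and of where the definite term comes from: it slides a ball of radius $d^{\alpha}/2$ (with $\alpha\in(0,1)$, so the radius diverges) centered at height $-d/2$ horizontally across the cylinder. Because this radius exceeds the width of $\capC$, the swept region covers the whole mid-height slab of the cylinder, so any spanning component is necessarily touched; and because the radius diverges, the ball's own curvature is negligible. At a touching point $q$ the near and intermediate contributions are controlled by $C_2\,d^{-\frac{s}{2}\alpha}$ via the symmetric-ball estimate of \cite[Lemma 3.1]{DSV16}, while the $d$-independent positive term $C_3$ is produced far from $q$: outside the cylinder $S_d$ the exterior is only the thin truncated cone $\widehat{C}_{\Gamma}(q)$, whose contribution is cancelled by its rotation $R(\widehat{C}_{\Gamma}(q))\subset\capA_i^{\capM}(q)$, leaving a whole unit ball $B_1(q+5e_1)$ of interior, so that $H_{\capM,s}(q)\geq -C_2\,d^{-\frac{s}{2}\alpha}+C_3>0$ for $d$ large, contradicting the Euler--Lagrange equation. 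If you want to salvage your outline, you would need either a barrier whose sweep covers the full cross-section of $\capC$ (as the paper's large ball does) or a proof of a uniform lower bound on $|H_{K,s}|$ along the thin cone together with an argument forcing contact on it; as written, the proposal has a genuine gap at its central step.
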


The topological properties in Theorem \ref{theoremConnectednessCriticalPtsN3} and \ref{theoremDisconnectednessCriticalPts} could be expected to be true because Dipierro, Valdinoci, and the author of this paper obtained similar results in \cite{DOV22} on the topology of fractional minimal surfaces without boundary in the similar situations. On one hand, they showed that minimizers of $P_s$ in a given cylinder coincides with the cylinder itself for sufficiently small $d$ where $d$ is the distance between two disjoint parallel and co-axial external(boundary) data. On the other hand, they showed that minimizers of $P_s$ in the cylinder are disconnected for sufficiently large $d$.

Interestingly, however, we show in Theorem \ref{theoremTwoHyperplanesNotMinimal} that the critical points (not necessarily fractional area-minimizing) cannot touch the boundary of the cylinder $\capC$ no mater what distance two parallel and co-axial boundaries have, while it is shown in \cite{DOV22} that minimizers of $P_s$ in a cylinder favorably stick to the boundary of the cylinder if $N=2$ and $d$ is large or if $N \geq 2$ and $d$ is small. Moreover, our results together with Remark \ref{remarkCriticalPtsDsmall} of Section \ref{sectionTheorem3} possibly indicate that critical points of $\Pers$ with two nearby parallel and co-axial compact boundaries might develop necks of catenoids, while this is not the case with fractional minimal surfaces considered in \cite{DOV22}. We remark that the existence of fractional minimal catenoids without boundary in $\mathR^3$ was shown by D\'avila, Del Pino, and Wei in \cite{DdPW18} if $s$ is close to 1.



The organization of this paper is as follows: in Section \ref{sectionProofTheorem1}, we prove Theorem \ref{theoremMinimalityHyperplane} by ``sliding'' a hyperplane until it touches critical points (see the proof of Theorem \ref{theoremMinimalityHyperplane} for the detail). In Section \ref{sectionProofTheorem2}, we first give the proof of Theorem \ref{theoremTwoHyperplanesNotMinimal} and then we study further properties of critical points of $\Pers$, computing the fractional mean curvature of a cone passing through the boundary of critical points. In Section \ref{sectionTheorem3}, we first give the proof of Lemma \ref{theoremPoppingCriticalPtwithTwoBdry} by constructing a suitable barrier and then, by using this lemma, we prove Theorem \ref{theoremConnectednessCriticalPtsN3}. Moreover, in Section \ref{sectionTheorem3}, we also prove Theorem \ref{theoremDisconnectednessCriticalPts} by means of the ``sliding method'' (see Section \ref{sectionTheorem3} for the detail).


\section{Proof of Theorem \ref{theoremMinimalityHyperplane}}\label{sectionProofTheorem1}
In this section, we prove Theorem \ref{theoremMinimalityHyperplane}. The idea of the proof is inspired by the so-called sliding method introduced by Dipierro, Savin, and Valdinoci in \cite{DSV17}. They developed this method in order to investigate the shape of fractional(nonlocal) minimal surfaces (see also \cites{DSV16, DSV20, DOV22} for further discussions). 

We proceeds with the proof in the following way: we slide a hyperplane, parallel to $\capC \cap \{x_N=0\}$, from below or above until it touches $\capM$ and assume by contradiction that there exists a touching point in $(\capC \cap \{x_N=0\})^c$. At the touching point $q$, we obtain the Euler-Lagrange equation \eqref{eulerLagrangeEqPPgS}. Then, taking into account all the contributions from the ``interior'' $\capA_i(q)$ and the ``exterior'' $\capA_e(q)$ of $\capM$, we can observe that the contribution from either $\capA_i(q)$ or $\capA_e(q)$ turns out to be strictly larger than that from the other region. This contradicts the Euler-Lagrange equation.
 

\begin{proof}[Proof of Theorem \ref{theoremMinimalityHyperplane}]

We first define a hyperplane $H_{\lambda} \coloneqq \{(x',\,x_N) \mid x_N = \lambda\}$ and two half-spaces 
\begin{equation}
	H_{\lambda}^{+} \coloneqq \{(x',\,x_N) \mid x_N > \lambda\} \quad \text{and} \quad  H_{\lambda}^{-} \coloneqq \{(x',\,x_N) \mid x_N < \lambda\}
\end{equation}
for $\lambda \in \mathR$. We set $P_{\lambda} : \mathR^N \to \mathR^N$ as the reflection map with respect to $H_{\lambda}$ for $\lambda\in\mathR$ and set $x_{\lambda} \coloneqq P_{\lambda}(x)$ for any $x \in \mathR^N$. Moreover, we denote by $C_{\Gamma_0}(q)$ a (filled) cone with vertex $q$ whose boundary passes through $\Gamma_0$, that is, $\{|x'|=a, \, x_N = 0\} \cap \partial C_{\Gamma_0}(q) = \Gamma_0$. We further set $C^{\lambda}_{\Gamma_0}(q) \coloneqq P_{\lambda}(C_{\Gamma_0}(q))$.

Now let $\capM \subset \mathR^N$ be the critical point chosen in Theorem \ref{theoremMinimalityHyperplane}. The minimizer $\capM$ is bounded. Hence, we can slide the hyperplane $H_{\lambda}$ from below until it touches the minimizer $\capM$. Our result in Theorem \ref{theoremMinimalityHyperplane} states that this touching does not occur in $H_{0}^{-} \cup H_{0}^{+}$ and thus, we assume by contradiction that there exist a constant $\lambda_0 < 0$ and a point $q \in \capM \cap \Omega$ such that 
\begin{equation*}
	T_{q} \capM = H_{\lambda_0} \quad  \text{and} \quad H_{\lambda_0}^- \cap \capM = \emptyset
\end{equation*}  
where $T_{q}\capM$ is a tangent space of $\capM$ at $q$. Due to the symmetry of our setting, we can conduct the same argument that we will show below in the case that we slide the hyperplane from above and the touching occurs in $H_{0}^{+}$. Hence, it is sufficient to show the proof in the case that the touching occurs in $H_{0}^{-}$. See also Figure \ref{figure1} for the situation that we consider in dimension 2.

\begin{figure}[h]
\begin{center}
	\includegraphics[keepaspectratio, scale=0.60]{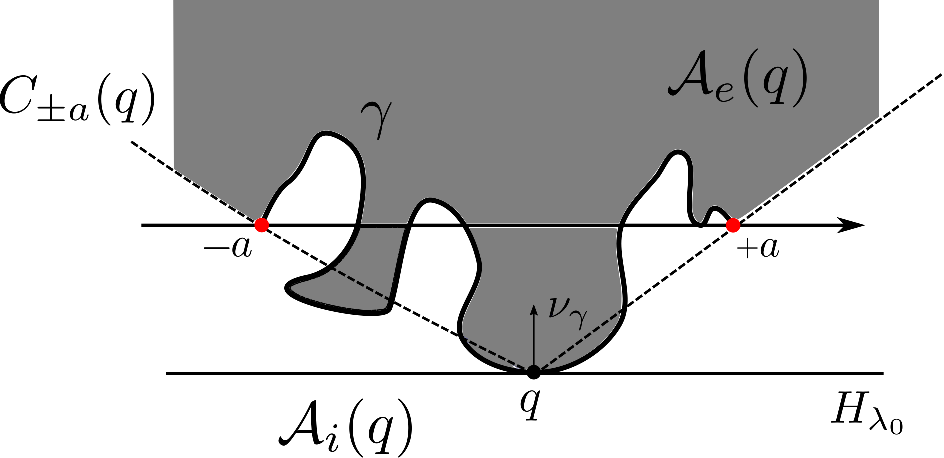}
\end{center}
\caption{The situation in dimension 2 in which the critical point $\capM = \gamma$ is a $C^{1,\alpha}$ curve with $\partial \gamma = \{\pm a\}$ where $\pm a \coloneqq (\pm a, \,0)$. The set $\capA_e(q)$ is shown in dark gray, the set $\capA_i(q)$ in white. The dashed lines represent the boundary of the cone $C_{\pm a}(q)$.}
\label{figure1}
\end{figure}

Since $\capM$ is an orientable compact critical point of $\Pers$, which means the vanishing of the first variation of $\Pers$ at $\capM$, and since $q \in \capM$, we obtain, from \eqref{eulerLagrangeEqPPgS}, that
\begin{equation}\label{eulerLagrangeMinimality}
		0 = H_{\capM,s}(q) \coloneqq c_N \, \int_{\mathR^N} \frac{\chi_{\capA_i(q)}(y) - \chi_{\capA_e(q)}(y) }{|y-q|^{N+s}} \,dy
\end{equation}
where the sets $\capA_e(q)$ and $\capA_i(q)$ are defined as in \eqref{interior} and \eqref{exterior}. We consider all the contributions from $\capA_e(q)$ and $\capA_i(q)$ in detail and show that the singular integral in the right-hand side of \eqref{eulerLagrangeMinimality} is strictly positive, which is a contradiction.

Indeed, since $C_{\Gamma_0}(q) \subset H_{\lambda_0}^{+}$ and $H_{\lambda_0}$ is tangential to $\capM$, we have that $P_{\lambda_0}(\capA_e(q)) \subset H_{\lambda_0}^{-} \subset \capA_i(q)$. This implies that $\mathR^N = \capA_e(q) \cup P_{\lambda_0}(\capA_e(q)) \cup \capA_i(q) \setminus  P_{\lambda_0}(\capA_e(q))$, up to negligible sets, and thus we can compute the fractional mean curvature $H_{\capM, s}$ at $q$ as follows:
\begin{align}
	c_N^{-1} H_{\capM, s}(q) &= \int_{\capA_e(q)} \frac{\chi_{\capA_i(q)}(y) - \chi_{\capA_e(q)}(y) }{|y-q|^{N+s}} \,dy + \int_{ P_{\lambda_0}(\capA_e(q)) } \frac{\chi_{\capA_i(q)}(y) - \chi_{\capA_e(q)}(y) }{|y-q|^{N+s}} \,dy \nonumber\\
	&\qquad + \int_{\capA_i(q) \setminus  P_{\lambda_0}(\capA_e(q))} \frac{\chi_{\capA_i(q)}(y) - \chi_{\capA_e(q)}(y) }{|y-q|^{N+s}} \,dy \nonumber\\
	&= \int_{\capA_e(q)} \frac{-1}{|y-q|^{N+s}} \,dy + \int_{ P_{\lambda_0}(\capA_e(q)) }\frac{1}{|y-q|^{N+s}} \,dy \nonumber\\
	&\qquad  + \int_{\capA_i(q) \setminus  P_{\lambda_0}(\capA_e(q))} \frac{1}{|y-q|^{N+s}} \,dy. \label{computationFracMC}
\end{align}
From the change of variables $y \mapsto P_{\lambda_0}(y)$ and the definition of $P_{\lambda_0}$, we have 
\begin{equation}\label{symmetryContribution}
	\int_{ P_{\lambda_0}(\capA_e(q)) }\frac{1}{|y-q|^{N+s}} \,dy = \int_{\capA_e(q)} \frac{1}{|y-q|^{N+s}} \,dy.
\end{equation}

\begin{figure}[h]
	\begin{center}
		\includegraphics[keepaspectratio, scale=0.55]{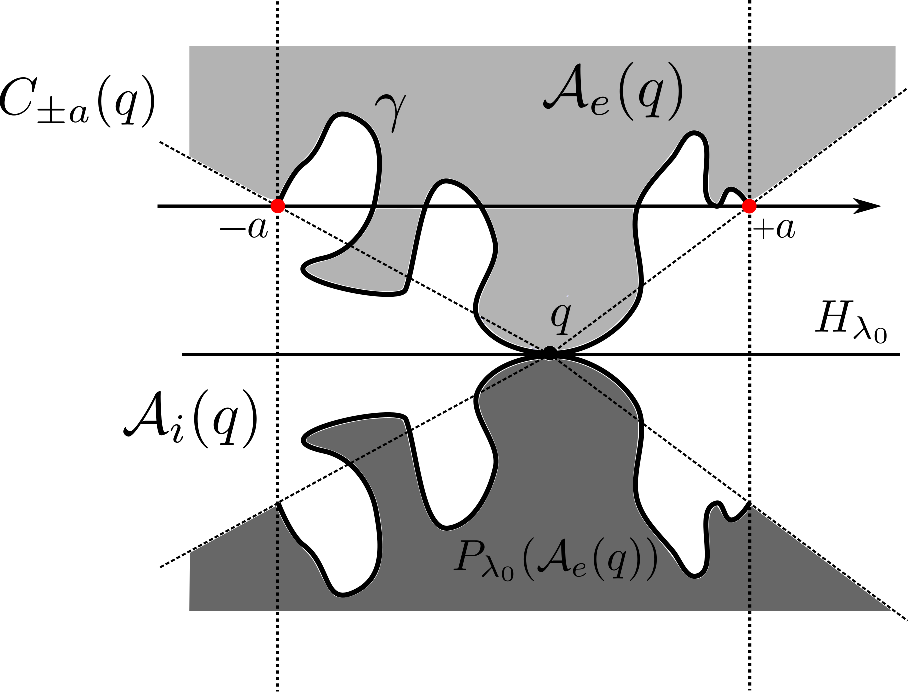}
	\end{center}
	\caption{The same situation as in Figure \ref{figure1}. The reflection $P_{\lambda_0}(\capA_e(q))$ of $\capA_e(q)$ is shown in dark gray, the set $\capA_e(q)$ in light gray.}
	\label{figure2}
\end{figure}

Moreover, we have that the volume of the set $\capA_i(q) \setminus  P_{\lambda_0}(\capA_e(q))$ is not zero because
\begin{equation*}
	\capA_i(q) \setminus  P_{\lambda_0}(\capA_e(q)) \supset \Omega^c \cap H_{\lambda_0} \cap C_{\Gamma_0}(q)^c \supset B_{\frac{|\lambda_0|}{100}}(p),
\end{equation*}
where $p= (p', \, \lambda_0) \in \mathR^{N-1} \times \mathR$ and some $p' \in \Omega^c$ with $|p'| > |\lambda_0| + a$. See also Figure \ref{figure2} for illustration in dimension 2. From \eqref{computationFracMC} and \eqref{symmetryContribution}, we obtain
\begin{align*}
	0 &= \int_{\capA_e(q)} \frac{-1}{|y-q|^{N+s}}dy + \int_{ \capA_e(q) }\frac{1}{|y-q|^{N+s}}dy + \int_{\capA_i(q) \setminus  P_{\lambda_0}(\capA_e(q))} \frac{1}{|y-q|^{N+s}}dy \nonumber\\
	&=  \int_{\capA_i(q) \setminus  P_{\lambda_0}(\capA_e(q))} \frac{1}{|y-q|^{N+s}}dy > 0,
\end{align*}
which is a contradiction.
	
\end{proof}

%


\section{Shape of Critical Points with Two Disjoint Compact Boundaries}\label{sectionProofTheorem2}
In this section, we first give the proof of Theorem \ref{theoremTwoHyperplanesNotMinimal} and then we further show some properties of critical points of $\Pers$ and compute the fractional mean curvature of cones.

\subsection{Proof of Theorem \ref{theoremTwoHyperplanesNotMinimal}}
In this subsection, we prove Theorem \ref{theoremTwoHyperplanesNotMinimal}. The idea of the proof is basically the same as the one in the proof of Theorem \ref{theoremMinimalityHyperplane}. The convexity assumption on $\capC$ is necessary for us to use the sliding method.

\begin{proof}[Proof of Theorem \ref{theoremTwoHyperplanesNotMinimal}]

We first define 
\begin{equation*}
	H_{\Gamma_i}^{+} \coloneqq \{(x',\,x_N) \mid x_N > h_i \}, \quad H_{\Gamma_i}^{-} \coloneqq \{(x',\,x_N) \mid x_N < h_i \}
\end{equation*}
for each $i \in \{1\,2\}$. Notice that 
\begin{equation*}
	\partial H_{\Gamma_i}^{+} \cap \partial \capC =  \partial H_{\Gamma_i}^{-} \cap \partial \capC  = \Gamma_i \quad \text{and} \quad \partial H_{\Gamma_i}^{+} \cap \capC = \partial H_{\Gamma_i}^{-} \cap \capC  = C_i
\end{equation*}
for each $i \in \{1,\,2\}$. 

\begin{figure}[h]
	\begin{center}
		\includegraphics[keepaspectratio, scale=0.50]{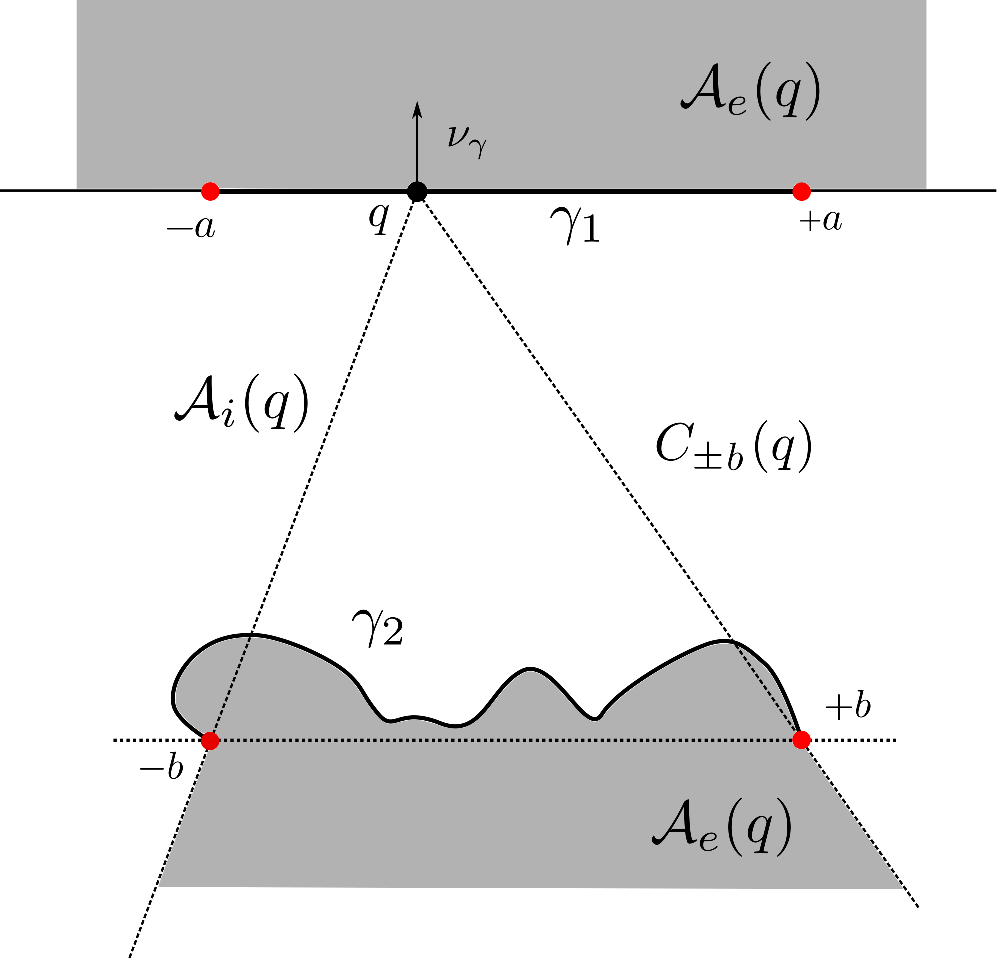}
	\end{center}
	\caption{The situation in dimension 2 in which each component $\capM_i = \gamma_i$ of the critical point $\capM = \gamma$ for $i \in \{1,\,2\}$ is a $C^{1,\alpha}$ curve with $\partial \gamma_i = \Gamma_i$ where $\Gamma_1=\{\pm a\}$ and $\Gamma_2 = \{\pm b\}$. The set $\capA_e(q)$ is shown in gray, the set $\capA_i(q)$ in white.}
	\label{figure4}
\end{figure}

Let $\capM \subset \mathR^N$ be the critical point chosen in Theorem \ref{theoremTwoHyperplanesNotMinimal}. By using the same argument as we show in the proof of Theorem \ref{theoremMinimalityHyperplane}, we obtain that $\capM$ cannot exist in the regions $H_{\Gamma_2}^{-}$ and $H_{\Gamma_1}^{+}$, that is, $\capM \cap (H_{\Gamma_2}^{-} \cup H_{\Gamma_1}^{+}) = \emptyset$.

We now show that any connected component of $\capM$ cannot be either $C_1$ or $C_2$. To see this, we assume by contradiction that there exists a connected component $\capM_1$ of $\capM$ such that $\capM_1$ coincides with $C_1$. Taking any $q \in \capM_1$, we have that the cone $C_{q, \Gamma_2}$ of vertex $q$ whose boundary passes through $\Gamma_2$ is contained in $H_{\Gamma_1}^{-}$. By choosing a proper orientation of $\capM$, we can have that $H_{\Gamma_1}^{+} \subset \capA_e(q)$ and $\capA_i(q) \subset H_{\Gamma_1}^{-}$ where the sets $\capA_e(q)$ and $\capA_i(q)$ are defined as in \eqref{interior} and \eqref{exterior}, respectively. See Figure \ref{figure4} for the situation in dimension 2. 

Since $\capM$ is a critical point of $\Pers$, from \eqref{eulerLagrangeEqPPgS}, we have that
\begin{equation}\label{eulerLagrangeMinimality2}
	0= H_{\capM,s}(q) = c_N \int_{\mathR^N} \frac{\chi_{\capA_i(q)}(y) - \chi_{\capA_e(q)}(y) }{|y-q|^{N+s}} \,dy.
\end{equation}
Now, by employing the same argument we show in the proof of Theorem \ref{theoremMinimalityHyperplane}, we obtain that
\begin{align*}
	c_N^{-1} H_{\capM,s}(q) &= \int_{\capA_e(q) \cap H_{\Gamma_1}^{+}} \frac{-1}{|y-q|^{N+s}} \,dy + \int_{\capA_e(q) \cap H_{\Gamma_1}^{-}} \frac{-1}{|y-q|^{N+s}} \,dy \nonumber\\
	&\qquad  + \int_{\capA_i(q)} \frac{1}{|y-q|^{N+s}} \,dy \nonumber\\
	&\leq \int_{B_{1/2}(-\lambda e_N)} \frac{-1}{|y-q|^{N+s}} \,dy <0
\end{align*}
because $B_{1/2}(-\lambda e_N) \subset \capA_e(q) \cap H_{\Gamma_2}^{-}$ where $\lambda > \max\{|x-z| \mid x \in C_2,\, z \in \capM\} + 1$.
This contradicts \eqref{eulerLagrangeMinimality2}. Therefore, we conclude that the first claim is valid.

To prove the rest of the claim, we can argue in the same way as in the proof of the first claim. Indeed, we slide any hyperplane parallel to the $x_N$-axis from right to left or from left to right until it touches the boundary of the cylinder $\capC$. If there is no touching point, from the convexity of $\capC$, we obtain that the critical point $\capM$ is strictly contained in $\capC $ except for its boundary. Thus, we assume by contradiction that there exists a touching point $q$ of $\capM$ in the complement of $\overline{\capC}$. Then, by choosing a proper orientation of $\capM$, we can show that the contribution from $\capA_e(q)$ relative to the touching point $q$ is strictly larger (or smaller) than that from $\capA_i(q)$, respectively, as we see in the proof of the first claim. This contradicts that the fractional mean curvature vanishes at the touching point $q$. Therefore, we conclude the proof of Theorem \ref{theoremTwoHyperplanesNotMinimal}.

\end{proof}

\subsection{Further Study on Critical Points and Cones}\label{subsectionFurtherStudy}

In this subsection, we study more the shape of critical points of $\Pers$ in the same situation as in Theorem \ref{theoremTwoHyperplanesNotMinimal} with $h_1= 1$ and $h_2=-1$. 

First, we investigate the shape of critical points in dimension 2. To see this, we divide $\mathR^2$ into four regions, that is, we define four regions $C_0^{t}$, $C_0^{b}$, $C_0^{r}$, and $C_0^{\ell}$ by
\begin{align*}
	C_0^{t} &\coloneqq \{(x_1,\, x_2) \in \mathR^2 \mid  x_2 > |x_1| \}, \nonumber\\
	C_0^{b} &\coloneqq \{(x_1,\, x_2) \in \mathR^2 \mid  x_2 < -|x_1|\}, \nonumber\\
	C_0^{r} &\coloneqq \{(x_1,\, x_2) \in \mathR^2 \mid -|x_1| < x_2 < |x_1|,\, 0< x_1 \}, \nonumber\\
	\text{and} \quad C_0^{\ell} & \coloneqq \{(x_1,\, x_2) \in \mathR^2 \mid  -|x_1| < x_2 < |x_1|,\, x_1 < 0 \},
\end{align*}
respectively. Moreover, we set
\begin{equation}\label{rightCone2D}
	C_0 \coloneqq (\partial C_0^{t} \cup \partial C_0^{b}) \cap \{(x_1,\,x_2) \mid |x_2|\leq 1\}.
\end{equation}
Notice that $\partial C_0 = \Gamma_1 \cup \Gamma_2$ where $\Gamma_1$ and $\Gamma_2$ are given in Theorem \ref{theoremTwoHyperplanesNotMinimal} with $h_1= 1$ and $h_2=-1$ in $\mathR^2$. From the definition of $\Gamma_1$ and $\Gamma_2$, we have that $\Gamma_1 = \{(\pm 1, \,1)\}$ and $\Gamma_2 = \{(\pm 1, \,-1)\}$. 

Now we prove that the fractional mean curvature of the cone $C_0$ vanishes at regular points, i.e.,
\begin{equation}\label{vanishingNonlocalMeanCurvCone}
	H_{C_0,s}(z) = 0 
\end{equation}
for any $z \in C_0 \setminus \partial C_d$ with $z \neq 0$. Indeed, let $z \in C_0 \setminus \{0, (\pm 1,\,1), (\pm 1,\,-1)\}$ and, by symmetry, we may assume that $z=(z_1,\,z_2)$ satisfies $-1 < z_1 <0$ and $0< z_2 <1$. Then, from the definition of the ``interior'' $\capA_i(z)$ and the ``exterior'' $\capA_e(z)$ of the cone $C_0$ and by taking a suitable orientation of $C_0 \setminus \{0\}$, we may obtain that
\begin{equation*}
	\capA_i(z) = \left( ([z,\,(1,\,1)]^{-} \cap [z,\,(1,\,-1)]^{+}) \setminus \overline{C_0^{t}} \right) \cup \left( ( [z,\,(-1,\,1)]^{-} \cap [z,\,(-1,\,-1)]^{+} ) \cup C_0^{\ell} \right)
\end{equation*}
and
\begin{equation*}
	\capA_e(z) = \left( ([z,\,(-1,\,1)]^{+} \cap [z,\,(1,\,1)]^{+}) \cup C_0^{t} \right) \cup \left(  ( [z,\,(-1,\,-1)]^{-} \cap [z,\,(1,\,-1)]^{-} ) \setminus \overline{C_0^{\ell}} \right) 
\end{equation*}
where we denote by $[p,\,q]$ the straight line passing through $p,\,q \in \mathR^2$ with $p\neq q$ and we define $[p,\,q]^{+}$ and $[p,\,q]^{-}$ by the upper part and the lower part of the region separated by the straight line $[p,\,q]$, respectively.  

Now, because of the symmetry of the cone $C_0$, we readily observe that, in dimension 2, the sets $\capA_i(z)$ and $\capA_e(z)$ are equivalent to each other in the sense that $T(\capA_i(z)) = \capA_e(z)$ where $T : \mathR^2 \to \mathR^2$ is an isometric map such that $\frac{x+T(x)}{2} \in \{(x_1,\,x_2) \mid x_2 = x_1\}$ for any $x \in \mathR^2$. By definition, we notice that $T(z) = z$.

Therefore, from the change of variables $x \mapsto T(x)$ and , we obtain that
\begin{align*}
	c_N^{-1} H_{C_0,s}(z) &= \int_{\capA_i(z)}\frac{1}{|y-z|^{2+s}} \,dy - \int_{\capA_e(z)}\frac{1}{|y-z|^{2+s}} \,dy \nonumber\\
	&=  \int_{\capA_i(z)}\frac{1}{|y-z|^{2+s}} \,dy - \int_{\capA_i(z)}\frac{1}{|T(y)-T(z)|^{2+s}} \,dy \nonumber\\
	&= 0.
\end{align*}
By combining this fact with Theorem \ref{theoremTwoHyperplanesNotMinimal}, we can prove the following proposition.
\begin{proposition}\label{theoremCriticalPtIntersectingCone}
	Let $N=2$ and $s \in (0,\,1)$. Let $\Gamma_1$ and $\Gamma_2$ be as in Theorem \ref{theoremTwoHyperplanesNotMinimal} with $h_1= 1$ and $h_2=-1$. Let $\gamma \subset \mathR^2$ be an orientable compact $C^{1,\alpha}$ curve with $\partial \gamma = \Gamma_1 \cup \Gamma_2$. Assume that $\capC = \{(x_1,\,x_2) \mid |x_1| < 1\}$ where $\capC$ is as in \eqref{boundaryCylinder}. If $\gamma$ is a critical point of $\Pers$ under normal variations, then $\gamma$ is not contained in either $\overline{C_0^{t} \cup C_0^{b}}$ or $\overline{C_0^{r} \cup C_0^{\ell}}$ whenever $(\gamma \setminus \partial \gamma) \cap (C_0 \setminus \{0\})\neq \emptyset$.

\end{proposition}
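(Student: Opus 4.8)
The plan is to argue by contradiction, in the spirit of the sliding arguments used for Theorems~\ref{theoremMinimalityHyperplane} and \ref{theoremTwoHyperplanesNotMinimal}, by comparing $\gamma$ with the cone $C_0$ at a point where they touch and exploiting \eqref{vanishingNonlocalMeanCurvCone} together with the strict interiority provided by Theorem~\ref{theoremTwoHyperplanesNotMinimal}. First I would reduce to one case: the isometry $T$ preserves $\Pers$, maps critical points to critical points, fixes $C_0$ and $\Gamma_1\cup\Gamma_2$, and interchanges $\overline{C_0^{t}\cup C_0^{b}}$ with $\overline{C_0^{r}\cup C_0^{\ell}}$, so it suffices to rule out $\gamma\subseteq\overline{C_0^{t}\cup C_0^{b}}$. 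Using the reflections across the coordinate axes (which preserve $\overline{C_0^{t}\cup C_0^{b}}$, $C_0$ and $\Gamma_1\cup\Gamma_2$), I may further assume the touching point $q\in(\gamma\setminus\partial\gamma)\cap(C_0\setminus\{0\})$ lies on the open arm of $C_0$ joining $0$ to $(-1,1)$, so $q=(-t,t)$ with $0<t<1$ and $q$ lies on the line $\ell=\{x_1+x_2=0\}$. Since $q$ is a regular point of the $C^1$ curve $\gamma$ and $\gamma$ sits locally in the closed half-plane $\{x_1+x_2\ge0\}$ with $q$ on its boundary line $\ell$, the tangent line $T_q\gamma$ equals $\ell$; thus near $q$ the curve $\gamma$ is tangent to the straight arm of $C_0$ and lies on its $C_0^{t}$-side.

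Next I would orient $\gamma$ so that $\nu_\gamma(q)=\nu_{C_0}(q)$ (legitimate, since $H_{\gamma,s}\equiv 0$ is insensitive to the orientation), write the Euler--Lagrange equation for $\gamma$ at $q$ and \eqref{vanishingNonlocalMeanCurvCone} for $C_0$ at $q$, and subtract them. Using that $\capA_e^{\cdot}(q)=\mathR^2\setminus\capA_i^{\cdot}(q)$ up to null sets, the two equations collapse to
\[
0=\int_{\mathR^2}\frac{\chi_{\capA_i^{\gamma}(q)}(y)-\chi_{\capA_i^{C_0}(q)}(y)}{|y-q|^{2+s}}\,dy .
\]
The heart of the matter is then the geometric claim that $\capA_i^{C_0}(q)\subseteq\capA_i^{\gamma}(q)$ up to a set of measure zero: granting it, the integrand above is nonnegative, so it must vanish a.e., i.e.\ $\capA_i^{\gamma}(q)$ and $\capA_i^{C_0}(q)$ coincide up to null sets.

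To establish the inclusion I would use that $\gamma$ and $C_0$ share the boundary $\Gamma_1\cup\Gamma_2$, so the $\mathZ_2$-cycle $\gamma+C_0$ bounds a planar region $R$; because a segment issued from $q$ meets $\gamma$ and $C_0$ with parities that differ exactly by $\chi_R$, and because $q$ is a cusp of $R$ (the two curves being tangent there, so generic segments from $q$ start outside $R$), one gets $\capA_i^{\gamma}(q)\,\triangle\,\capA_i^{C_0}(q)=R$ up to null sets. Near $q$, $R$ is the thin cusped sliver trapped between $\gamma$ and the straight arm on the $C_0^{t}$-side, and a direct inspection of the intersection parities and of the sign of $(q-y)\cdot\nu_\gamma(q)$ there gives $R\subseteq\capA_i^{\gamma}(q)\setminus\capA_i^{C_0}(q)$ near $q$. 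Since $\gamma\subseteq\overline{C_0^{t}\cup C_0^{b}}$ and $C_0$ is locally the boundary of that double wedge — with Theorem~\ref{theoremTwoHyperplanesNotMinimal} confining $\gamma\setminus\partial\gamma$, and hence the bounded region $R$, to the open slab $\{|x_1|<1\}$ — one propagates this to $R\subseteq\capA_i^{\gamma}(q)\setminus\capA_i^{C_0}(q)$ on all of $R$, which yields the claimed inclusion. Combined with the displayed identity, $|R|=0$, so $\gamma$ and $C_0$ coincide as $1$-chains; this is impossible, since $\gamma$ is a $C^{1,\alpha}$ manifold with boundary while $C_0$ is not one at the vertex $0$, where four arms meet. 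This contradiction rules out $\gamma\subseteq\overline{C_0^{t}\cup C_0^{b}}$, and by the symmetry reduction also $\gamma\subseteq\overline{C_0^{r}\cup C_0^{\ell}}$, which proves the proposition.

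The step I expect to be the main obstacle is precisely the global justification of $\capA_i^{C_0}(q)\subseteq\capA_i^{\gamma}(q)$: controlling the parity of intersections of segments from $q$ with $\gamma$ and with $C_0$ far away from $q$, and ruling out a sign change of the integrand across the, possibly disconnected, region $R$. The tangency of $\gamma$ and $C_0$ at $q$, the containment $\gamma\subseteq\overline{C_0^{t}\cup C_0^{b}}$, and the strict interiority from Theorem~\ref{theoremTwoHyperplanesNotMinimal} are the three inputs that should make this topological bookkeeping go through, but carrying it out carefully is where the real work lies.
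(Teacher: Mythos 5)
Your proposal is correct and follows essentially the same route as the paper: reduce by symmetry to $\gamma \subseteq \overline{C_0^{t} \cup C_0^{b}}$, compare the Euler--Lagrange equation $H_{\gamma,s}(q)=0$ with $H_{C_0,s}(q)=0$ from \eqref{vanishingNonlocalMeanCurvCone} at a touching point, and use the one-sided inclusion of interiors/exteriors coming from the containment assumption together with Theorem \ref{theoremTwoHyperplanesNotMinimal} (the paper's \eqref{nonEmptyDifferenceGammaCone01}--\eqref{nonEmptyDifferenceGammaCone02}) to reach a sign contradiction. The only difference is presentational: you obtain the non-degeneracy of the gap region a posteriori (zero measure would force $\gamma$ to coincide with $C_0$, impossible since $C_0$ is not a $C^{1,\alpha}$ manifold at its vertex), whereas the paper asserts $|\capA_i^{\gamma}(z)\setminus\capA_i^{C_0}(z)|\neq 0$ directly.
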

\begin{remark}
	We may observe, by combining Proposition \ref{theoremCriticalPtIntersectingCone} with Theorem \ref{theoremTwoHyperplanesNotMinimal}, that the possible shape of minimizers of $\Pers$ in dimension 2 whose boundary is $\Gamma_1 \cup \Gamma_2$ is depicted in Figure \ref{figure6}.
\end{remark}

\begin{figure}[h]
	\begin{tabular}{cc}

	\begin{minipage}{0.48\hsize}
		\centering
		\includegraphics[keepaspectratio, scale=0.40]{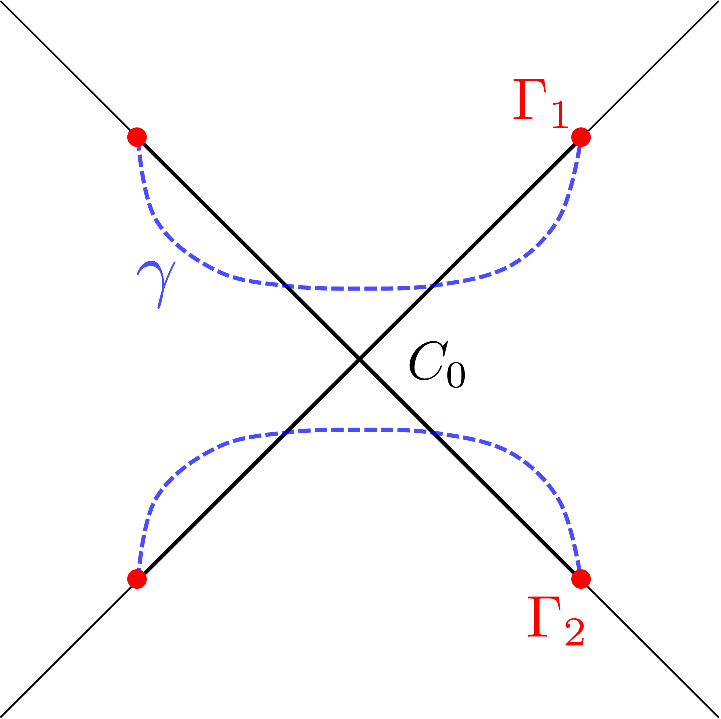}
	\end{minipage}
	
	\begin{minipage}{0.48\hsize}
		\centering
		\includegraphics[keepaspectratio, scale=0.40]{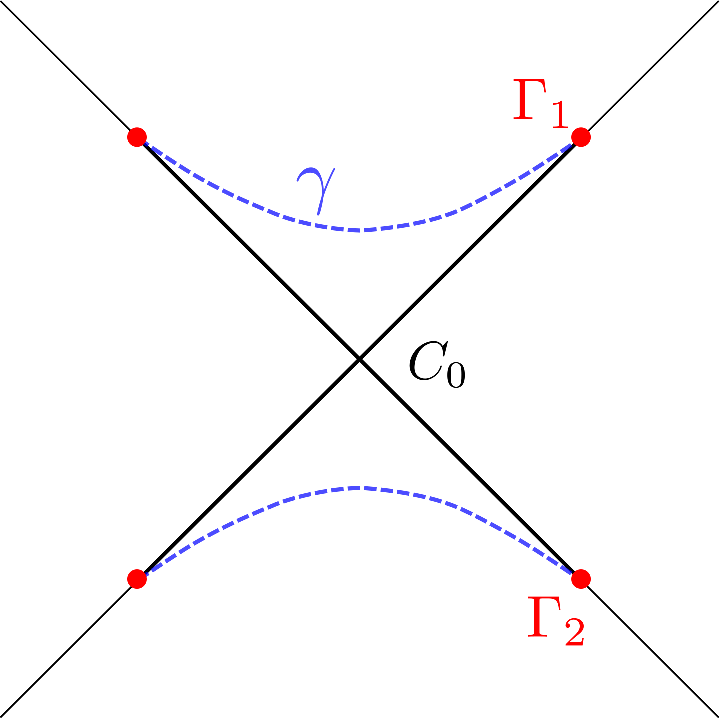}
	\end{minipage}
	
	\end{tabular}
	\caption{Possible minimizers $\gamma$ of $\Pers$ in dimension 2 with $\partial \gamma = \Gamma_1 \cup \Gamma_2$ is shown with dashed lines. On the right, $\gamma$ does not intersect with $C_0$ except at their boundaries $\Gamma_1$ and $\Gamma_2$. }
	\label{figure6}
\end{figure} 

\begin{proof}
	Let $\gamma \subset \gamma$ be as in Proposition \ref{theoremCriticalPtIntersectingCone} and we assume that $(\gamma \setminus \partial \gamma) \cap (C_0 \setminus \{0\}) \neq \emptyset$. We argue by contradiction that either $\gamma \subset  \overline{C_0^{t} \cup C_0^{b}}$ or $\gamma \subset  \overline{C_0^{r} \cup C_0^{\ell}}$ holds. Due to the symmetry of $C_0$, it is sufficient to consider the case that $\gamma \subset  \overline{C_0^{t} \cup C_0^{b}}$ holds. From the assumption, we can choose a point $z \in (\gamma \setminus \partial \gamma) \cap (C_0 \setminus \{0\})$.
	
	Now, by choosing a proper orientation, we consider the ``interior'' and ``exterior'' of $\gamma$ and $C_0$ at the touching point $z$. To see this, we set the interior and exterior at $q \in \eta$ of a curve $\eta \subset \mathR^2$ as $\capA_i^{\eta}(z)$ and $\capA_e^{\eta}(z)$, respectively. Then, from the smoothness of the critical point $\gamma$ and the assumption that $\gamma \subset  C_0^{t} \cup C_0^{b}$, we obtain, by taking a suitable orientation of $\gamma$ and $C_0$, that 
	\begin{equation}\label{nonEmptyDifferenceGammaCone01}
		|\capA_e^{C_0}(z) \setminus \capA_e^{\gamma}(z)| = |\capA_i^{\gamma}(z) \setminus \capA_i^{C_0}(z)| \neq 0
	\end{equation}
	and 
	\begin{equation}\label{nonEmptyDifferenceGammaCone02}
		|\capA_e^{\gamma}(z) \setminus \capA_e^{C_0}(z)| = |\capA_i^{C_0}(z) \setminus \capA_i^{\gamma}(z)| = 0.
	\end{equation}
	Here, from Theorem \ref{theoremTwoHyperplanesNotMinimal}, we have used the fact that all the critical points of $\Pers$ in our situation are contained in the box $\{(x_1,\,x_2) \mid |x_1| < 1, \, |x_2| < 1 \}$. See also Figure \ref{figure5} for our situation.
	
	\begin{figure}[h]
		\begin{center}
			\includegraphics[keepaspectratio, scale=0.55]{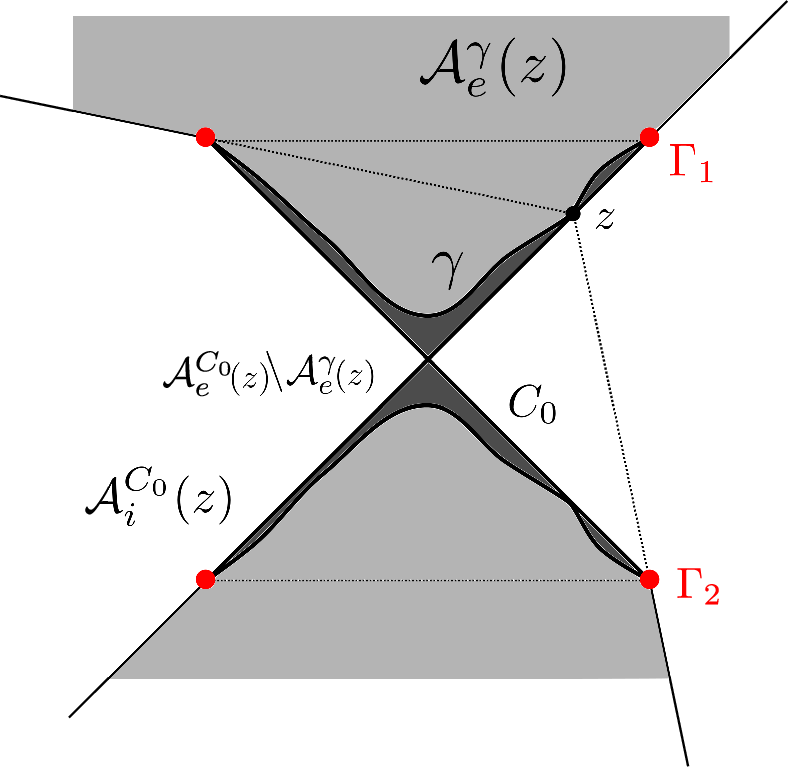}
		\end{center}
		\caption{The situation of the critical point $\gamma$ and the touching point $z$ in which $\gamma$ is included in $C_0^{t} \cup C_0^{b}$ with $\partial \gamma = \Gamma_1 \cup \Gamma_2$. The set $\capA_e^{\gamma}(z)$ is shown in light gray, the set $\capA_i^{C_0}(z)$ in white, and the set $\capA_e^{C_0}(z) \setminus \capA_e^{\gamma}(z)$ in dark gray.}
		\label{figure5}
	\end{figure}
	
	Hence, since $\gamma$ is a critical point of $\Pers$, we have that
	\begin{equation*}
		H_{\gamma,s}(z) = 0.
	\end{equation*}
	From \eqref{vanishingNonlocalMeanCurvCone}, \eqref{nonEmptyDifferenceGammaCone01}, and \eqref{nonEmptyDifferenceGammaCone02}, we have
	\begin{align}
		0 = H_{\gamma,s}(z) &= H_{\gamma,s}(z) - H_{C_0,s}(z) \nonumber\\
		&= \int_{\mathR^2} \frac{\chi_{\capA_i^{\gamma}(z)}(y) - \chi_{\capA_i^{C_0}(z)}(y) + \chi_{\capA_e^{C_0}(z)}(y) - \chi_{\capA_e^{\gamma}(z)}(y) }{|y-z|^{2+s}} \,dy \nonumber\\
		&= \int_{ \capA_i^{\gamma}(z) \setminus \capA_i^{C_0}(z) } \frac{1}{|y-z|^{2+s}} \,dy - \int_{ \capA_i^{C_0}(z) \setminus \capA_i^{\gamma}(z) } \frac{1}{|y-z|^{2+s}} \,dy \nonumber\\
		&\qquad + \int_{ \capA_e^{C_0}(z) \setminus \capA_e^{\gamma}(z) } \frac{1}{|y-z|^{2+s}} \,dy - \int_{ \capA_e^{\gamma}(z) \setminus \capA_e^{C_0}(z) } \frac{1}{|y-z|^{2+s}} \,dy \nonumber\\
		&= \int_{ \capA_i^{\gamma}(z) \setminus \capA_i^{C_0}(z) } \frac{1}{|y-z|^{2+s}} \,dy + \int_{ \capA_e^{C_0}(z) \setminus \capA_e^{\gamma}(z) } \frac{1}{|y-z|^{2+s}} \,dy > 0,
	\end{align}
	 which is a contradiction. Therefore we obtain the claim.
\end{proof}

\begin{remark}\label{remarkCriticalPts2D}
	We briefly consider the situation of Theorem \ref{theoremTwoHyperplanesNotMinimal} with $h_1 = d$ and $h_2=-d$ for $d \neq 1$ and $d>0$ and see what kind of shape the critical points in dimension 2 look like. Notice that we have treated the case of $d=1$ in Proposition \ref{theoremCriticalPtIntersectingCone}.
	
	Assume that $h_1=d$ and $h_2=-d$ for $d>0$. We define a cone $C_d$ of vertex $0$ by
	\begin{equation}\label{cone2DPositiveorNegativeFracMC}
		C_d \coloneqq \{(x_1,\,x_2) \in \mathR^2 \mid |x_2|=d|x_1|, \, |x_2| \leq d\}.
	\end{equation}
	Notice that $\partial C_d = \Gamma_1 \cup \Gamma_2$. By slightly modifying the argument for showing that $H_{C_0,s}=0$ on $C_0 \setminus (\partial C_0 \cup \{0\})$ and taking a proper orientation, we can show that the fractional mean curvature $H_{C_d,s}(z)$ of $C_d$ is either positive or negative for any $z \in C_d \setminus \partial C_d$ with $z \neq 0$. Then, again by slightly modifying the argument in the proof of Proposition \ref{theoremCriticalPtIntersectingCone}, we obtain the same result as in Proposition \ref{theoremCriticalPtIntersectingCone} even for any $d \neq 1$.
\end{remark}

We next prove the same result as Proposition \ref{theoremCriticalPtIntersectingCone} in higher dimensions. To see this, we also show that the fractional mean curvature of a cone passing through $\Gamma_1 \cup \Gamma_2$ is either positive or negative everywhere except at its vertex in higher dimensions. The idea of the proof is the same as that in the proof of Proposition \ref{theoremCriticalPtIntersectingCone}. We first give some notations. We define a bounded tube $D_0$ and a unbounded cone $\widetilde{C}_0$ by
\begin{align*}
	D_0 &\coloneqq \{(x',\,x_N) \in \mathR^{N-1} \times \mathR \mid |x'| < 1, \, -1 < x_N < 1\} \nonumber \\
	\widetilde{C}_0 &\coloneqq \{(x',\,x_N) \in \mathR^{N-1} \times \mathR \mid |x_N| > |x'| \}.
\end{align*}
Moreover, we set $C^N_0 \coloneqq \partial \widetilde{C}_0 \cap \{(x',\,x_N) \mid |x_N| \leq 1\}$ and decompose $\widetilde{C}_0$ into two parts $\widetilde{C}_0^{+}$ and $\widetilde{C}_0^{-}$ which are defined by
\begin{align*}
	\widetilde{C}_0^{+} &\coloneqq \{(x',\,x_N) \in \mathR^{N-1} \times \mathR \mid x_N > |x'| \} \nonumber\\
	\widetilde{C}_0^{-} &\coloneqq \{(x',\,x_N) \in \mathR^{N-1} \times \mathR \mid x_N < -|x'| \}.
\end{align*}
Notice that $C^N_0$ coincides with $C_0$ given in \eqref{rightCone2D} if $N=2$ and $\partial C^N_0 = \Gamma_1 \cup \Gamma_2$.
\begin{proposition}\label{propositionCriticalIntersectingConeN>3}
	Let $N \geq 3$ and $s \in (0,\,1)$. Let $\Gamma_1$ and $\Gamma_2$ be as in Theorem \ref{theoremTwoHyperplanesNotMinimal} with $h_1= 1$ and $h_2=-1$. Let $\capM \subset \mathR^N$ be an orientable compact $C^{1,\alpha}$ manifold with $\partial \capM = \Gamma_1 \cup \Gamma_2$. Assume that $\capC = \{(x',\,x_N) \mid |x'| < 1\}$ where $\capC$ is as in \eqref{boundaryCylinder}. If $\capM$ is a critical point of $\Pers$ under normal variations, then $\capM$ is not contained in either $\overline{\widetilde{C}_0^{+} \cup \widetilde{C}_0^{-}}$ or $D_0 \setminus (\overline{\widetilde{C}_0^{+} \cup \widetilde{C}_0^{-}})$ whenever $(\capM \setminus \partial \capM) \cap (C^N_0 \setminus \{0\})\neq \emptyset$.
\end{proposition}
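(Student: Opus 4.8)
The approach is to imitate the proof of Proposition~\ref{theoremCriticalPtIntersectingCone}: first show that, in contrast with dimension $2$, the truncated cone $C^N_0$ has a \emph{nonvanishing} fractional mean curvature at each of its regular points, with a sign depending only on the chosen orientation; then, at a touching point of $\capM$ with $C^N_0$, compare the two fractional mean curvatures, using Theorem~\ref{theoremTwoHyperplanesNotMinimal} to confine $\capM$ to the box $D_0$ and thereby to reduce the discrepancy between the associated ``interiors''/``exteriors'' to a one-sided term.

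\textbf{Step 1 (the cone has nonzero fractional mean curvature when $N\ge 3$).} Fix a regular point $z\in C^N_0\setminus(\partial C^N_0\cup\{0\})$; rotating about the $x_N$-axis we may take $z=(t,0,\dots,0,t)$ with $t\in(0,1)$, so $z\in\partial\widetilde{C}_0^{+}$. Let $\sigma_z\colon\mathR^N\to\mathR^N$ be the reflection interchanging $x_1$ and $x_N$. Then $\sigma_z(z)=z$, and the fixed hyperplane of $\sigma_z$ is the tangent plane of $\partial\widetilde{C}_0^{+}$ at $z$, so $\sigma_z$ exchanges the two local sides of $\partial\widetilde{C}_0$ at $z$. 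The key difference with the two-dimensional case is that $\sigma_z(\widetilde{C}_0)=\{x_1^2>x_2^2+\dots+x_{N-1}^2+x_N^2\}$ is \emph{strictly} contained in $\widetilde{C}_0^{c}=\{x_1^2+\dots+x_{N-1}^2>x_N^2\}$ as soon as $N\ge 3$, the solid double cone $\widetilde{C}_0$ being strictly thinner, in solid angle, than its complement (for $N=2$ one has equality, which is exactly why $H_{C_0,s}(z)=0$ in \eqref{vanishingNonlocalMeanCurvCone}). Substituting $y\mapsto\sigma_z(y)$ in the principal value defining $H_{\partial\widetilde{C}_0,s}(z)$ — legitimate since $\sigma_z$ is an isometry fixing $z$ — and observing that $\widetilde{C}_0^{c}$ and $\sigma_z(\widetilde{C}_0)$ are tangent at $z$, so that $|y-z|^{-N-s}$ is integrable over $\widetilde{C}_0^{c}\setminus\sigma_z(\widetilde{C}_0)$ near $z$, one obtains, with the orientation for which $\capA_i$ is the solid cone $\widetilde{C}_0$,
\begin{equation*}
	c_N^{-1}\,H_{\partial\widetilde{C}_0,s}(z)=-\int_{\widetilde{C}_0^{c}\setminus\sigma_z(\widetilde{C}_0)}\frac{dy}{|y-z|^{N+s}}<0,
\end{equation*}
and the opposite sign for the reversed orientation. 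One finally passes from the infinite cone to the truncation $C^N_0$: the difference $H_{C^N_0,s}(z)-H_{\partial\widetilde{C}_0,s}(z)$ only sees the points whose segment to $z$ meets the removed caps $\partial\widetilde{C}_0^{+}\cap\{x_N>1\}$ and $\partial\widetilde{C}_0^{-}\cap\{x_N<-1\}$ an odd number of times, and — examining $\capA_i(z),\capA_e(z)$ for $C^N_0$ as was done in dimension $2$ for $C_0$ — this correction does not reverse the sign. Hence $H_{C^N_0,s}(z)\neq 0$, with a definite sign fixed by the orientation.

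\textbf{Step 2 (comparison at the touching point).} Suppose $\capM\subset\overline{\widetilde{C}_0^{+}\cup\widetilde{C}_0^{-}}$ and, by hypothesis, pick $z\in(\capM\setminus\partial\capM)\cap(C^N_0\setminus\{0\})$; at $z$ both $\capM$ and $C^N_0$ are $C^{1,\alpha}$, and since $\capM$ lies on one side of $C^N_0$ and touches it there, they share the tangent plane and a unit normal, which we use to orient both. By Theorem~\ref{theoremTwoHyperplanesNotMinimal}, $\capM\subset\overline{D_0}$ and $\capM\setminus\partial\capM$ does not meet $\partial\capC$; hence for $y\notin D_0$ the segment $[z,y]$ meets neither $\capM$ nor $C^N_0$, so $y\in\capA_i(z)$ for $\capM$ iff $y\in\capA_i(z)$ for $C^N_0$. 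Thus $\capA_i^{\capM}(z)\,\triangle\,\capA_i^{C^N_0}(z)\subset D_0$; moreover, as $\capM$ stays on one side of $C^N_0$, this symmetric difference is \emph{one-sided} (as in \eqref{nonEmptyDifferenceGammaCone01}--\eqref{nonEmptyDifferenceGammaCone02}) and has positive measure, because $\capM$ cannot equal the non-$C^{1,\alpha}$ set $C^N_0$. Writing $\chi_{\capA_e}=1-\chi_{\capA_i}$ and using $H_{\capM,s}(z)=0$, one gets
\begin{equation*}
	0=\big(H_{\capM,s}(z)-H_{C^N_0,s}(z)\big)+H_{C^N_0,s}(z)=\varepsilon\,2c_N\int_{\capA_i^{\capM}(z)\,\triangle\,\capA_i^{C^N_0}(z)}\frac{dy}{|y-z|^{N+s}}+H_{C^N_0,s}(z),
\end{equation*}
where $\varepsilon\in\{+1,-1\}$ is determined by which side of $C^N_0$ contains $\capM$. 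Choosing, via Step~1, the orientation of $C^N_0$ for which $H_{C^N_0,s}(z)$ carries the sign $\varepsilon$, the right-hand side is strictly nonzero — a contradiction; the case $\capM\subset D_0\setminus(\overline{\widetilde{C}_0^{+}\cup\widetilde{C}_0^{-}})$ is identical after interchanging the orientations.

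The hard part is Step~1. The reflection $\sigma_z$ makes the sign transparent only for the \emph{infinite} cone: it preserves neither $C^N_0$ nor the boundary spheres $\Gamma_1\cup\Gamma_2$, so the ``cap'' correction must be treated by hand, which amounts to carrying out, for $N\ge 3$, the explicit description of $\capA_i(z),\capA_e(z)$ of the truncated cone that in dimension $2$ is performed in the proof of Proposition~\ref{theoremCriticalPtIntersectingCone} and in Remark~\ref{remarkCriticalPts2D}. A further delicate point is the sign bookkeeping in Step~2: one must check that in each of the two cases — the interior of $\capM$ on the solid-cone side, respectively on the ``equatorial'' side of $C^N_0$ — the orientation forced by the touching is the one that makes the one-signed term and $H_{C^N_0,s}(z)$ add up rather than cancel; since for $N\ge 3$ the two sides of $C^N_0$ are no longer symmetric, this has to be verified separately in the two cases.
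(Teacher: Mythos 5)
Your overall strategy is the same as the paper's (sign the fractional mean curvature of the cone, then compare it with $H_{\capM,s}(z)=0$ at a touching point via a one-sided symmetric difference), but the step you yourself flag as ``the hard part'' is precisely where the proof lives, and you have not supplied it. Your reflection $\sigma_z$ only signs the curvature of the \emph{infinite} double cone $\partial\widetilde{C}_0$; the assertion that the truncation correction ``does not reverse the sign'' is exactly what needs proof, and it is not a small perturbation: the regions where the classifications of $\partial\widetilde{C}_0$ and of $C^N_0$ disagree (the symmetric difference of $\widetilde{C}_0$ and the vertex-$z$ cones $C_{\Gamma_1}(z)\cup C_{\Gamma_2}(z)$ inside $\{|x_N|\ge 1\}$) are unbounded and carry contributions of both signs. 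The paper avoids the infinite cone altogether: with the normal at $z$ pointing toward $\widetilde{C}_0$ it writes down the exterior of the \emph{truncated} cone explicitly,
\begin{equation*}
\capA_e^{C^N_0}(z)=\bigl(\widetilde{C}_0\cap\{|x_N|\le 1\}\bigr)\cup\bigl((C_{\Gamma_1}(z)\cup C_{\Gamma_2}(z))\cap\{|x_N|\ge 1\}\bigr),
\end{equation*}
and reflects this whole set across the tangent hyperplane $H_z$, checking $T_{H_z}(\capA_e^{C^N_0}(z))\subset\capA_i^{C^N_0}(z)$ with a positive-measure complement; the caps are built into the reflected set, so no correction term ever appears. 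That containment is the missing idea in your Step 1.

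Two further points in your Step 2 do not stand as written. First, the claim that for $y\notin D_0$ the segment $[z,y]$ meets neither $\capM$ nor $C^N_0$ is false (a segment from $z$ to a point far above the cylinder typically crosses the upper sheet of $C^N_0$, and possibly $\capM$); the conclusion $\capA_i^{\capM}(z)\,\triangle\,\capA_i^{C^N_0}(z)\subset\overline{D_0}$ can be rescued, but only by a different argument (both hypersurfaces share the boundary $\Gamma_1\cup\Gamma_2$, lie in $\overline{D_0}$, and are oriented by the same normal at $z$), and in fact the paper never needs this localization. Second, you cannot ``choose, via Step 1, the orientation of $C^N_0$ for which $H_{C^N_0,s}(z)$ carries the sign $\varepsilon$'': the cone's orientation at $z$ is tied to that of $\capM$, and reversing the common normal flips both the sign of $H_{C^N_0,s}(z)$ and the sign $\varepsilon$ of the one-sided term, so whether the two terms reinforce or compete is an invariant that must be computed, not arranged. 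You flag this yourself in your closing paragraph, but flagging it is not resolving it; in the configuration with $\capM$ on the solid-cone side the two terms do reinforce (this is the case the paper's computation \eqref{computationFracMCConeCritPt} covers), while on the equatorial side the naive comparison produces terms of opposite sign, so that case genuinely requires an additional argument rather than the symmetry appeal available in dimension $2$.
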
 
\begin{proof}
	The proof is similar to that of Proposition \ref{theoremCriticalPtIntersectingCone} and we here show a rough sketch of the proof. Let $\capM$ be the critical point selected in Proposition \ref{propositionCriticalIntersectingConeN>3}. We assume that $(\capM \setminus \partial \capM) \cap (C^N_0 \setminus \{0\})$ is not empty and we choose a point $z \in (\capM \setminus \partial \capM) \cap (C^N_0 \setminus \{0\})$. Suppose by contradiction that either 
	\begin{equation*}
		\capM \subset \overline{\widetilde{C}_0^{+} \cup \widetilde{C}_0^{-}} \quad \text{or} \quad \capM \subset D_0 \setminus (\overline{\widetilde{C}_0^{+} \cup \widetilde{C}_0^{-}})
	\end{equation*}
	holds. First, by choosing a proper orientation, we show that 
	\begin{equation}\label{fracMCConeatZ}
		H_{C^N_0, s}(z) > 0.
	\end{equation}
	Indeed, if we take the unit normal vector $\nu_{C^N_0}(z)$ of the cone $C^N_0$ at $z$ in such a way that the direction is towards $\widetilde{C}_0$, then the ``interior'' $\capA_i^{C^N_0}(z)$ and ``exterior'' $\capA_e^{C^N_0}(z)$ can be defined as
	\begin{equation*}
		\capA_i^{C^N_0}(z) = \mathR^N \setminus \left( \capA_e^{C^N_0}(z) \cup C^N_0 \right)
	\end{equation*}
	and 
	\begin{equation*}
		\capA_e^{C^N_0}(z) = (\widetilde{C}_0 \cap \{(x',\,x_N) \mid |x_N| \leq 1\}) \cup  \left( (C_{\Gamma_1}(z) \cup C_{\Gamma_2}(z)) \cap \{(x',\,x_N) \mid |x_N| \geq 1\} \right) 
	\end{equation*}
	where $C_{\Gamma_i}(z)$ is defined by a (filled) cone of vertex $z$ passing through $\Gamma_i$ for each $i \in \{1,2\}$. Now we take a hyperplane $H_z$ which is tangent to $\partial \widetilde{C}_0$ and passes through $z$ and define the reflection map $T_{H_z}$ with respect to $H_z$. From the definitions of $C^N_0$, $\capA_i^{C^N_0}(z)$, and $\capA_e^{C^N_0}(z)$, we have
	\begin{equation*}
		T_{H_z}(\capA_e^{C^N_0}(z)) \subset \capA_i^{C^N_0}(z) \quad \text{and} \quad \left|\capA_i^{C^N_0}(z) \setminus T_{H_z}(\capA_e^{C^N_0}(z))\right| \neq 0.
	\end{equation*}
	Since $T_{H_z}$ is an isometry and $T_{H_z}(z) = z$, we obtain the following:
	\begin{align}
		H_{C^N_0, s}(z) &= \int_{\capA_i^{C^N_0}(z) \setminus T_{H_z}(\capA_e^{C^N_0}(z))} \frac{dx}{|x-z|^{N+s}}+ \int_{T_{H_z}(\capA_e^{C^N_0}(z))} \frac{dx}{|x-z|^{N+s}} \nonumber\\
		&\qquad - \int_{\capA_e^{C^N_0}(z)} \frac{dx}{|x-z|^{N+s}} \nonumber\\
		&= \int_{\capA_i^{C^N_0}(z) \setminus T_{H_z}(\capA_e^{C^N_0}(z))} \frac{dx}{|x-z|^{N+s}} + 0 > 0,
	\end{align}
	which implies \eqref{fracMCConeatZ}.
	
	Now, since $\capM$ is a critical point of $\Pers$, we have the Euler-Lagrange equation
	\begin{equation*}
		H_{\capM,s}(z) = 0.
	\end{equation*}
	Thus, taking the unit normal vector $\nu_{\capM}(z)$ of $\capM$ at $z$ as $\nu_{C^N_0}(z)$, we can have the following computation:
	\begin{align}
		0 &= H_{\capM,s}(z) - H_{C^N_0,s}(z) + H_{C^N_0,s}(z) \nonumber \\
		&= 2\int_{\capA_e^{C^N_0}(z) \setminus \capA_e^{\capM}(z)} \frac{1}{|x-z|^{N+s}}\,dx - 2\int_{\capA_e^{\capM}(z) \setminus \capA_e^{C^N_0}(z)} \frac{1}{|x-z|^{N+s}}\,dx + H_{C^N_0,s}(z) \label{computationFracMCConeCritPt}.
	\end{align}
	From the assumption, we can observe that
	\begin{equation*}
		\left| \capA_e^{C^N_0}(z) \setminus \capA_e^{\capM}(z) \right| > 0\quad \text{and} \quad \left| \capA_e^{\capM}(z) \setminus \capA_e^{C^N_0}(z) \right| = 0.
	\end{equation*}
	Therefore, from \eqref{fracMCConeatZ} and \eqref{computationFracMCConeCritPt}, we reach a contradiction.
\end{proof}

\section{Topology of Critical Points}\label{sectionTheorem3}
In this section, we investigate the topology of critical points with two parallel and co-axial boundaries and prove Theorem \ref{theoremConnectednessCriticalPtsN3} and \ref{theoremDisconnectednessCriticalPts}. 

%

Before proving our main theorems of this section, we show Lemma \ref{theoremPoppingCriticalPtwithTwoBdry}. The idea of the proof is to construct a small barrier, whose fractional mean curvature is strictly positive or negative, and to ``slide'' the barrier until it touches the critical point. The construction of the barrier is inspired by the one shown in \cite{DSV23Bdry}. See also \cite[Proof of Proposition 4.1]{DOV22}. In the sequel, without loss of generality, we may assume that $\capC = \{(x',\,x_N) \mid |x'| < 1\}$ where $\capC$ is as in \eqref{boundaryCylinder} for simplicity.

\begin{proof}[Proof of Lemma \ref{theoremPoppingCriticalPtwithTwoBdry}]
 	We first fix $\varepsilon \in (0,\,1)$ so small that $\delta=\delta(\varepsilon) \coloneqq (-\log \varepsilon)^{-1/2} < \frac{1}{2}$ and we define a smooth bump function $w_{\varepsilon}: \mathR^{N-1} \to \mathR$ by 
	\begin{align*}
		w_{\varepsilon}(x') \coloneqq 
		\begin{cases}
			-\exp\left( - \frac{1}{\delta^2-|x'|^2} \right) & \quad \text{for $|x'| < \delta$} \\ 
			0 & \quad \text{otherwise}.
		\end{cases}
	\end{align*}
	Notice that $w_{\varepsilon} \in C^{\infty}(\mathR^{N-1})$, $w_{\varepsilon}(x')=0$ for $|x'|=\delta$, $w_{\varepsilon}(0) = -\varepsilon$, and 
	\begin{equation}\label{auxilliaryFuncPhi}
		\lim_{\varepsilon \downarrow 0}\phi(\varepsilon) \coloneqq  \lim_{\varepsilon \downarrow 0}\|\nabla'^2 w_{\varepsilon}\|_{C^0} = 0.
	\end{equation}
	If necessary, we may choose $\varepsilon$ in such a way that $\phi(\varepsilon)<1$. Note that, since $\phi$ is an increasing function in a neighborhood $I_{\phi} \subset [0,\,1)$ of the origin, its inverse function $\phi^{-1}$ exists in a neighborhood $J_{\phi} \subset [0,\,1)$ of the origin. We then set
	\begin{equation}\label{radiausFuncEpsilon}
		r(\varepsilon) \coloneqq \left(2(N-1)\phi(\varepsilon)\right)^{-1} \quad \text{and} \quad d(\varepsilon) \coloneqq 2r(\varepsilon).
	\end{equation}
	Moreover, we define a positive constant $\varepsilon_d$ as
	\begin{align*}
		\varepsilon_d \coloneqq
		\begin{cases}
			\phi^{-1}(( 2(N-1)d)^{-1}) &\quad \text{if $(2(N-1)d)^{-1} \in J_{\phi}$} \\
			(\text{any positive constant in $J_{\phi}$}) &\quad \text{if $(2(N-1)d)^{-1} \not\in J_{\phi}$}.
		\end{cases}
	\end{align*}
	By definition, we observe that $r(\varepsilon_d) \geq d$ and $\varepsilon_d$ can be chosen independently of $d$ if $d < (2(N-1))^{-1}$ since $J_{\phi} \subset [0,\,1)$.
	
	In addition, we choose a smooth function $v_{\varepsilon}:\mathR^{N-1} \to \mathR$ such that $v_{\varepsilon}$ is radially symmetric, $0 \leq v_{\varepsilon}(x') \leq 1$ for $x' \in \mathR^{N-1}$, and $\spt v_{\varepsilon} \subset B'_{1/8}(0)$ where we denote by $B'_r(0)$ an open ball centered at the origin of radius $r$ in $\mathR^{N-1}$. In particular, we choose $v_{\varepsilon}$ in such a way that its subgraph $\{(x',\,x_N) \mid 0 \leq x_N \leq v_{\varepsilon}(x')\}$ of $v_{\varepsilon}$ contains a cylinder of height $\phi(\varepsilon)^{\beta} < 1$ for $\beta \in (0,\,s)$ with the base of radius $\frac{1}{16}$. Then we define a function $\widetilde{w}_{\varepsilon} : \mathR^{N-1} \to \mathR$ by
	\begin{align*}
		\widetilde{w}_{\varepsilon}(x') \coloneqq
		\begin{cases}
			w_{\varepsilon}(x') &\quad  \text{for $|x'| < \delta$} \\
			0 &\quad \text{for $\delta \leq |x'| < \frac{5}{8}$} \\
			v_{\varepsilon}\left(x'- b'\right) &\quad \text{for $\frac{5}{8} \leq |x'| < \frac{7}{8}$} \\
			0 &\quad \text{for $|x'| \geq \frac{7}{8}$}
		\end{cases}
	\end{align*}
	where $b' \in \mathR^{N-1}$ is any point with $|b'|=\frac{3}{4}$. Notice that $\widetilde{w}_{\varepsilon}$ is smooth in $\mathR^{N-1}$.

	Now we construct a barrier against $\widetilde{\capM}^{\varepsilon,t}$, i.e., an orientable compact $(N-1)$-dimensional piecewise smooth manifold $\widetilde{\capM}^{\varepsilon,t}$ in the following way: first, taking any $t \in (0,\,\varepsilon]$, we define four sets
	\begin{align*}
		&\capM_1^{\varepsilon, t} \coloneqq \{(x',\,x_N) \mid |x'| \leq 1, \, x_N = \widetilde{w}_{\varepsilon}(x') + t\}, \nonumber\\
		\text{and} \quad &\capM_2^{\varepsilon,t} \coloneqq \{(x',\, x_N) \mid |x'| \leq 1, \, x_N = -d(\varepsilon) + t\}
	\end{align*}
	where $d(\varepsilon) \coloneqq 2r(\varepsilon)$. Then we define our barrier as $\widetilde{\capM}^{\varepsilon,t} \coloneqq \capM_1^{\varepsilon,t} \cup \capM_2^{\varepsilon,t}$. By construction, we can easily see that $\widetilde{\capM}^{\varepsilon,t}$ is an orientable compact $(N-1)$-dimensional smooth manifold with $\partial \capM_1^{\varepsilon} = \Gamma^{\varepsilon,t}_1$ and $\partial \capM_2^{\varepsilon,t}  = \Gamma^{\varepsilon,t}_2$ where we define
	\begin{equation*}
		\Gamma_1^{\varepsilon,t} \coloneqq \capC \cap \{x_N = t\} \quad \text{and} \quad \Gamma_2^{\varepsilon,t} \coloneqq \capC \cap \{x_N = -d(\varepsilon) + t\}.
	\end{equation*}
	
	\begin{figure}[h]
		\begin{center}
			\includegraphics[keepaspectratio, scale=0.60]{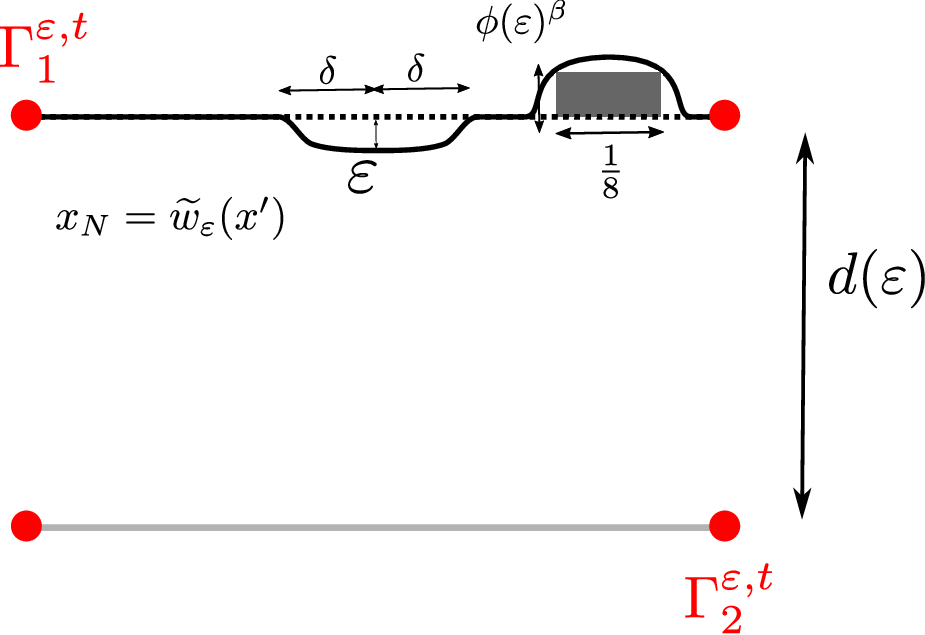}
		\end{center}
		\caption{The barrier $\widetilde{\capM}^{\varepsilon,t}= \capM^{\varepsilon,t}_1 \cup \capM_2^{\varepsilon,t}$ associated with a function $\widetilde{w}_{\varepsilon}$ in dimension 2. The graph of $\widetilde{w}_{\varepsilon}$ in $\{|x'|<1\}$ is depicted with black lines and the cylinder in dark gray.}
		\label{figureBarrierThreeBumps}
	\end{figure}
	
	We next construct another barrier in which the small bump associated with $v_{\varepsilon}$ is removed from $\widetilde{\capM}^{\varepsilon,t}$. First, for any $t \in (0,\,\varepsilon]$, we define a manifold $\capM_3^{\varepsilon,t}$ as the graph of $w_{\varepsilon}$, i.e., 
	\begin{equation*}
		\capM_3^{\varepsilon,t} \coloneqq \{(x',\,x_N) \mid |x'| < 1,\, x_N =  w_{\varepsilon}(x') + t \}
	\end{equation*}
	and, then, define the second barrier as $\capM^{\varepsilon,t} \coloneqq \capM_3^{\varepsilon,t} \cup \capM_2^{\varepsilon,t}$. Notice that $\partial \capM^{\varepsilon,t} = \Gamma_1^{\varepsilon,t} \cup \Gamma_2^{\varepsilon,t}$.

	We now show, up to orientation, that the fractional mean curvature of $\widetilde{\capM}^{\varepsilon,t}$ is negative on the graph of $w_{\varepsilon}$. Let $q \in \capM_1^{\varepsilon,t}$ be any point such that $|q'| < \delta(\varepsilon)$ where we set $q=(q',\,q_N)$. We now define $C_{\Gamma_i^{\varepsilon,t}}(q)$ by a (filled) cone of vertex $q$ whose boundary passes through $\Gamma_i^{\varepsilon,t}$ for $i \in \{1,\,2\}$. Then, up to orientation, we can choose the interior and exterior of $\widetilde{\capM}^{\varepsilon,t}$ at $q$ as 
	\begin{equation*}
		\capA^{\widetilde{\capM}^{\varepsilon,t}}_i(q) \coloneqq \mathR^N \setminus \left( \capA^{\widetilde{\capM}^{\varepsilon,t}}_e(q) \cup \widetilde{\capM}^{\varepsilon,t} \right)
	\end{equation*}
	and 
	\begin{align*}
		\capA^{\widetilde{\capM}^{\varepsilon,t}}_e(q) \coloneqq \left( C_{\Gamma_2^{\varepsilon,t}}(q) \cap \{(x',\,x_N) \mid x_N < -d(\varepsilon) + t \} \right) \cup \{(x',\,x_N) \mid x_N > \widetilde{w}^q_{\varepsilon}(x')\},
	\end{align*}
	respectively, where we define a function $\widetilde{w}^q_{\varepsilon} : \mathR^{N-1} \to \mathR$ by
	\begin{align*}
		\widetilde{w}^q_{\varepsilon}(x') \coloneqq 
		\begin{cases}
			\widetilde{w}_{\varepsilon}(x') &\quad \text{for $|x'| < 1$} \\
			(\text{the graph function of $\partial C_{\Gamma_1^{\varepsilon, t}}(q)$}) &\quad \text{for $|x'| \geq 1$}. 
		\end{cases}
	\end{align*}
	
	We now compute the fractional mean curvature $H_{\widetilde{\capM}^{\varepsilon,t},s}(q)$ at $q$ of $\widetilde{\capM}^{\varepsilon,t}$. From the definition of the fractional mean curvature and by the change of variables, we have
	\begin{align}
		- H_{\widetilde{\capM}^{\varepsilon,t},s}(q) &= \int_{\mathR^N} \frac{\chi_{\capA_e^{\widetilde{\capM}^{\varepsilon,t}}(q)}(q - x) - \chi_{\capA_i^{\widetilde{\capM}^{\varepsilon,t}}(q)}(q-x) }{|x|^{N+s}} \,dx  \nonumber\\
		&=  \int_{B'_{r}(0) \times (-r,\,r)} \frac{\chi_{\capA_e^{\widetilde{\capM}^{\varepsilon,t}}(q)}(q - x) - \chi_{\capA_i^{\widetilde{\capM}^{\varepsilon,t}}(q)}(q-x) }{|x|^{N+s}} \,dx \nonumber\\
		&\qquad + \int_{(B'_{r}(0) \times (-r,\,r))^c} \frac{\chi_{\capA_e^{\widetilde{\capM}^{\varepsilon,t}}(q)}(q - x) - \chi_{\capA_i^{\widetilde{\capM}^{\varepsilon,t}}(q)}(q-x) }{|x|^{N+s}} \,dx \nonumber\\
		&\eqqcolon (I) + (II) \label{fracMeanCurvatureComp01}
	\end{align}
	where we set $r \coloneqq r(\varepsilon)$ where $r(\varepsilon)$ is as in \eqref{radiausFuncEpsilon}. 
	
	We first compute $(I)$. Thanks to the choice of $r$ and the construction of $\widetilde{\capM}^{\varepsilon,t}$, we observe that
	\begin{equation*}
		(B'_r(q) \times (-r,\,r)) \cap  \left(C_{\Gamma_2^{\varepsilon,t}}(q) \cap \{(x',\,x_N) \mid x_N < -d(\varepsilon)+t\} \right) = \emptyset.
	\end{equation*}
	Thus we can represent the set $\partial \capA_e^{\widetilde{\capM}^{\varepsilon,t}}(q)$ in $B'_r(0) \times (-r,\,r)$ as the graph of $\widetilde{w}^q_{\varepsilon}$. By doing the similar computation to the one in \cite[Section 3]{BFV14}, we obtain
	\begin{align}
		 (I) &= - 2 \int_{B'_r(0)} F\left( \frac{\widetilde{w}^q_{\varepsilon}(q') - \widetilde{w}^q_{\varepsilon}(q'-x')}{|x'|}\right) \,\frac{dx'}{|x'|^{N-1+s}}  \nonumber\\
		&= - \int_{B'_r(0)} F\left( \frac{\widetilde{w}^q_{\varepsilon}(q') - \widetilde{w}^q_{\varepsilon}(q'-x')}{|x'|}\right) \,\frac{dx'}{|x'|^{N-1+s}} \nonumber\\
		&\qquad - \int_{B'_r(0)} F\left( \frac{\widetilde{w}^q_{\varepsilon}(q') - \widetilde{w}^q_{\varepsilon}(q'+x')}{|x'|}\right) \,\frac{dx'}{|x'|^{N-1+s}} \nonumber\\
		&= \int_{B'_r(0)} F\left( \frac{-\widetilde{w}^q_{\varepsilon}(q') + \widetilde{w}^q_{\varepsilon}(q'-x')}{|x'|}\right) \,\frac{dx'}{|x'|^{N-1+s}} \nonumber\\
		&\qquad - \int_{B'_r(0)} F\left( \frac{\widetilde{w}^q_{\varepsilon}(q') - \widetilde{w}^q_{\varepsilon}(q'+x')}{|x'|}\right) \,\frac{dx'}{|x'|^{N-1+s}}
		 \label{fracMeanCurvatureComp02}
	\end{align}
	where we set
	\begin{equation*}
		F(t) \coloneqq \int_{0}^{t} \frac{1}{(1+\sigma^2)^{\frac{N+s}{2}}} \,d\sigma
	\end{equation*}
	for any $t\in\mathR$. Note that we have used the change of variables $x' \mapsto -x'$ in the second equality of \eqref{fracMeanCurvatureComp02} and the fact that $F$ is odd in the last equality of \eqref{fracMeanCurvatureComp02}. By definition, we have that $\widetilde{w}^q_{\varepsilon}(q') = w_{\varepsilon}(q')$ and $\widetilde{w}^q_{\varepsilon} \geq w_{\varepsilon}$ in $\mathR^{N-1}$. 
	Since $F$ is increasing, we derive from \eqref{fracMeanCurvatureComp02} that
	\begin{align}
		(I) & \geq \int_{B'_r(0)} F\left( \frac{-w_{\varepsilon}(q') + w_{\varepsilon}(q'-x')}{|x'|}\right) \,\frac{dx'}{|x'|^{N-1+s}} \nonumber\\
		&\qquad - \int_{B'_r(0)} F\left( \frac{w_{\varepsilon}(q') - w_{\varepsilon}(q'+x')}{|x'|}\right) \,\frac{dx'}{|x'|^{N-1+s}}.
	\end{align}
	Now, by using the fundamental theorem of calculus in \eqref{fracMeanCurvatureComp02}, we obtain
	\begin{equation}\label{fracMeanCurvatureComp03}
		(I)  \geq - \int_{B'_r(0)} \int_{0}^{1} F'\left( a(x',\,q',\,\lambda) \right) \,d\lambda \frac{2w_{\varepsilon}(q') - w_{\varepsilon}(q'+x') - w_{\varepsilon}(q'-x')}{|x'|^{N+s}}\,dx'
	\end{equation}
	where we set $a(x',\,q',\,\lambda)$ as
	\begin{equation*}
		 a(x',\,q',\,\lambda) \coloneqq \lambda \frac{w_{\varepsilon}(q') - w_{\varepsilon}(q'+ x')}{|x'|} + (1-\lambda)\frac{-w_{\varepsilon}(q') + w_{\varepsilon}(q'-x')}{|x'|}
	\end{equation*}
	for $x',\,q' \in \mathR^{N-1}$ and $\lambda \in [0,\,1]$. Thus, by using again the fundamental theorem of calculus, we have
	\begin{equation*}
		(I) \geq -\int_{B'_r(0)}\int_{0}^{1}\frac{|\nabla' w_{\varepsilon}(q'+ \rho x') - \nabla'w_{\varepsilon}(q' - \rho x')|}{|x'|^{N-1+s}} \, d\rho\,dx'.
	\end{equation*}
	Since $w_{\varepsilon}$ is smooth in $\mathR^{N-1}$, we then have
	\begin{equation}
		(I) \geq -2\|\nabla'^2 w_{\varepsilon}\|_{C^0} \int_{B'_r(0)} \frac{dx'}{|x'|^{N-2+s}} = -\frac{2\omega_{N-2}}{1-s} \|\nabla'^2 w_{\varepsilon}\|_{C^0} \,r^{1-s}. \label{fracMeanCurvatureComp04}
	\end{equation}
	Now we compute $(II)$ in the following way: since $B_r(0)  \subset  B'_r(0) \times (-r,\,r) \subset \mathR^N$, we have
	\begin{equation}
		(II) \geq - \int_{B^c_r(0)} \frac{dx}{|x|^{N+s}} = - \frac{\omega_{N-1}}{s} r^{-s}.\label{fracMeanCurvatureComp05}
	\end{equation}
	Therefore, from \eqref{fracMeanCurvatureComp04} and \eqref{fracMeanCurvatureComp05}, we obtain
	\begin{equation}\label{fracMeanCurvatureComp06}
		- H_{\widetilde{\capM}^{\varepsilon,t},s}(q) \geq - \left( c_1\,\|\nabla'^2 w_{\varepsilon}\|_{C^0} \,r^{1-s} + c_2 \,r^{-s} \right)
	\end{equation}
	where $c_1$ and $c_2$ are defined as
	\begin{equation*}
		c_1 \coloneqq \frac{2\omega_{N-2}}{1-s} \quad \text{and} \quad c_2 \coloneqq \frac{\omega_{N-1}}{s},
	\end{equation*}
	respectively. From \eqref{radiausFuncEpsilon}, it holds that the right-hand side of \eqref{fracMeanCurvatureComp06} takes the maximum at $r=r(\varepsilon) \in (0,\,d(\varepsilon))$. Hence we finally obtain, from \eqref{fracMeanCurvatureComp06}, that
	\begin{equation}\label{fracMeanCurvatureMBound}
		- H_{\widetilde{\capM}^{\varepsilon,t},s}(q) \geq - c \, \|\nabla'^2 w_{\varepsilon}\|_{C^0}^s = - c \, \phi(\varepsilon)^s
	\end{equation}
	where we set the constant $c= c(N,s)>0$ as
	\begin{equation*}
		c = c(N,s) \coloneqq \frac{(2(N-1))^s \omega_{N-1}}{s(1-s)}.
	\end{equation*}
	
	
	Next we compute the fractional mean curvature $H_{\capM^{\varepsilon,t},s}(q)$ at $q$ by using Estimate \eqref{fracMeanCurvatureMBound} of the fractional mean curvature $H_{\widetilde{\capM}^{\varepsilon,t},s}(q)$ at $q$. Indeed, from the construction of $\capM^{\varepsilon,t}$ and $\widetilde{\capM}^{\varepsilon,t}$, we have that, by choosing a proper orientation, $\capA^{\widetilde{\capM}^{\varepsilon,t}}_e(q) \subset \capA^{\capM^{\varepsilon,t}}_e(q)$	and thus we obtain
	\begin{align}
		- H_{\capM^{\varepsilon,t},s}(q) &= - H_{\widetilde{\capM}^{\varepsilon,t},s}(q) \nonumber\\
		&\qquad + \int_{\mathR^N}\frac{\chi_{\capA^{\capM^{\varepsilon,t}}_e(q)}(x) - \chi_{\capA^{\widetilde{\capM}^{\varepsilon,t}}_e(q)}(x) + \chi_{\capA^{\widetilde{\capM}^{\varepsilon,t}}_i(q)}(x) - \chi_{\capA^{\capM^{\varepsilon,t}}_i(q)}(x)}{|x-q|^{N+s}}\,dx \nonumber\\
		&= - H_{\widetilde{\capM}^{\varepsilon,t},s}(q) + \int_{\mathR^N} \frac{\chi_{\capA^{\capM^{\varepsilon,t}}_e(q)\setminus \capA^{\widetilde{\capM}^{\varepsilon,t}}_e(q)}(x) + \chi_{\capA^{\widetilde{\capM}^{\varepsilon,t}}_i(q) \setminus \capA^{\capM^{\varepsilon,t}}_i(q)}(x)}{|x-q|^{N+s}} \,dx \nonumber\\
		&= - H_{\widetilde{\capM}^{\varepsilon,t},s}(q) +  2\int_{ \capA^{\capM^{\varepsilon,t}}_e(q) \setminus \capA^{\widetilde{\capM}^{\varepsilon,t}}_e(q)} \frac{1}{|x-q|^{N+s}} \,dx. \label{fracMeanCurvatureTildeH}
	\end{align}
	Recalling that $ \capA^{\capM^{\varepsilon,t}}_e(q) \setminus \capA^{\widetilde{\capM}^{\varepsilon,t}}_e(q)$ contains the subgraph $\{0\leq x_N \leq v_{\varepsilon}(x'-b')\}$ and the subgraph contains the cylinder of height $\phi(\varepsilon)^{\beta}$ with the base of radius $1/16$, we have
	\begin{equation*}
		\left| \capA^{\capM^{\varepsilon,t}}_e(q) \setminus \capA^{\widetilde{\capM}^{\varepsilon,t}}_e(q) \right| \geq c'\,\phi(\varepsilon)^{\beta}.
	\end{equation*}
	where a constant $c'=c'(N)>0$ depends only on $N$. Moreover, we observe that the distance between $q$ and the cylinder is less than, at most, $2+\phi(\varepsilon)^{\beta}$ and this is bounded from above by some constant depending only on $N$, $s$, and $\beta$. Hence from \eqref{fracMeanCurvatureMBound} and \eqref{fracMeanCurvatureTildeH} and by recalling the choice of $\phi$, we obtain
	\begin{align}
		- H_{\capM^{\varepsilon,t},s}(q) &\geq -c \, \phi(\varepsilon)^s + \frac{2c'}{(2+\phi(\varepsilon)^{\beta})^{N+s}} \,\phi(\varepsilon)^{\beta} \nonumber\\
		&\geq -c \, \phi(\varepsilon)^s + c'' \phi(\varepsilon)^{\beta} \nonumber\\
		&= \phi(\varepsilon)^{\beta}\left( - c \, \phi(\varepsilon)^{s-\beta} + c''\right) \label{fracMeanCurvatureTildeH02}
	\end{align}
	where $c''>0$ is a constant depending only on $N$, $s$, and $\beta$. Since $0 < \beta < s$ and $\phi(\varepsilon) \downarrow 0$ as $\varepsilon \downarrow 0$, we choose $\varepsilon_1=\varepsilon_1(N,s,\beta) \in I_{\phi} \cap (0,\, \frac{1}{100})$ so small that the right-hand side of \eqref{fracMeanCurvatureTildeH02} is positive for any $\varepsilon \in (0,\,\varepsilon_1]$. Therefore, from \eqref{fracMeanCurvatureTildeH02}, we obtain that $H_{\capM^{\varepsilon,t}, s}(q) < 0$ for $\varepsilon \in (0,\,\varepsilon_1]$. 
	
	Now we set $\varepsilon_2 \coloneqq \min\{\varepsilon_1,\,\varepsilon_d\}$. Since $r(\varepsilon_d) \geq d$ and $\delta(\varepsilon_2) < \frac{1}{2}$, we may observe that $d(\varepsilon_2) \geq d$ and $\capM_3^{\varepsilon_2,t} \cap \Gamma_1 = \emptyset$ for any $t \in (0,\,\varepsilon_2]$.  For our convenience, we denote $\varepsilon_2$ by $\varepsilon$ in the sequel. 
	
	We then slide the barrier $\capM^{\varepsilon,t}$ from above, i.e., we vary the parameter $t$ stating at $\varepsilon$ until $\capM^{\varepsilon,t}$ touches the critical point $\capM$. To prove the claim, we assume by contradiction that there exists $t_1 \in (0,\,\varepsilon]$ such that $\capM \cap \capM_3^{\varepsilon,t_1} \neq \emptyset$ and $\capM \cap \capM_3^{\varepsilon,t} = \emptyset$ for any $t \in (t_1,\,\varepsilon]$. We pick up a point $q_{\varepsilon,t_1} \in \capM \cap \capM_3^{\varepsilon, t_1}$. Notice that
	\begin{equation*}
		\{(x',\,x_N) \mid -d < x_N < 0\} \cap \capM_2^{\varepsilon, t_1} = \emptyset
	\end{equation*}
	since $d(\varepsilon) \geq d$. See Figure \ref{figureComparisonCriticalBarrier} to favor the intuition in dimension 2.
	
	\begin{figure}[h]
		\begin{center}
			\includegraphics[keepaspectratio, scale=0.50]{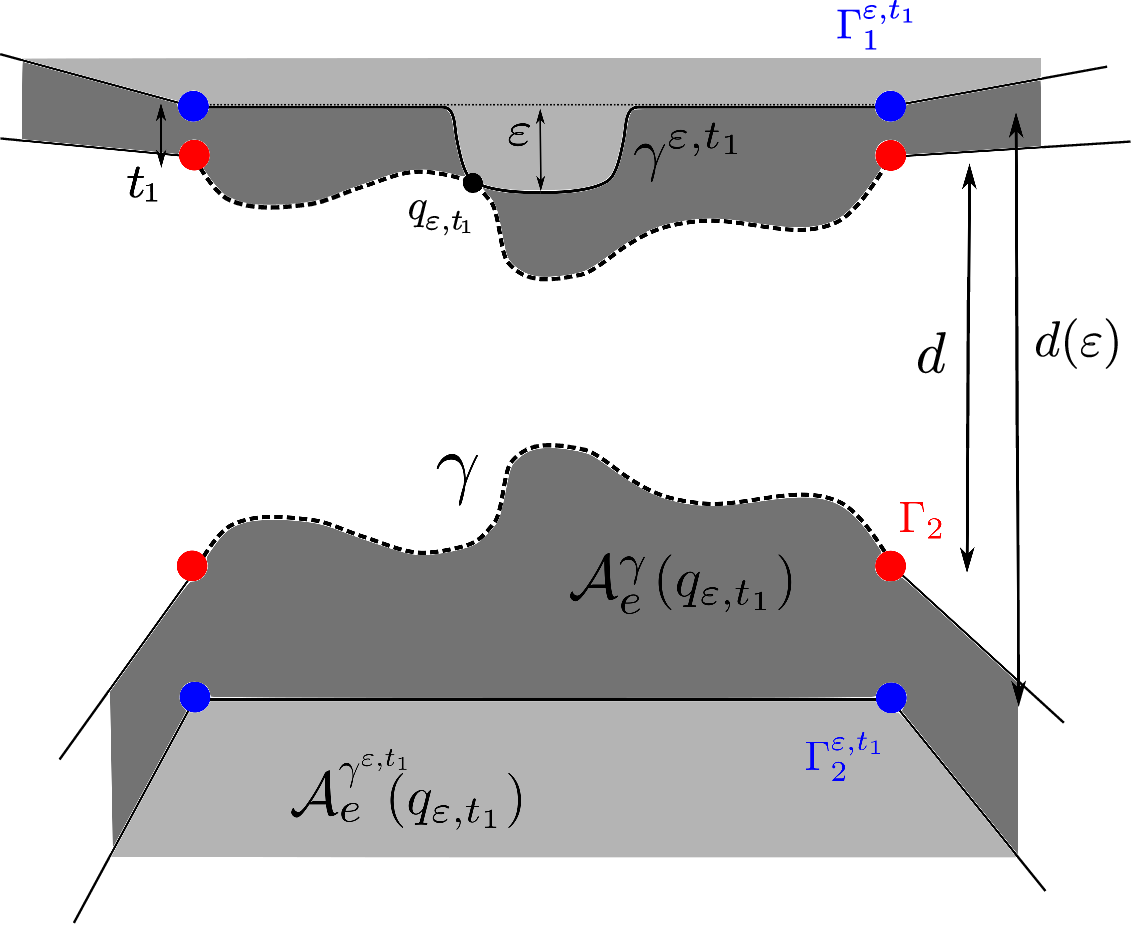}
		\end{center}
		\caption{The critical point $\gamma$ depicted with dashed lines and the barrier $\gamma^{\varepsilon, t_1}$ with black line. $\gamma$ touches $\gamma^{\varepsilon, t_1}$ at $q_{\varepsilon,t_1}$ from above. The exterior $\capA^{\gamma^{\varepsilon, t_1}}_e(q_{\varepsilon,t_1})$ of $\gamma^{\varepsilon, t_1}$ is depicted in light gray and the exterior $\capA_e^{\gamma}(q_{\varepsilon,t_1})$ of $\gamma$ in both light and dark gray.}
		\label{figureComparisonCriticalBarrier}
	\end{figure}
	
	Since $\capM$ is a critical point of $\Pers$ under normal variations, we obtain
	\begin{equation*}
		H_{\capM,s}(q_{\varepsilon,t_1}) = 0.
	\end{equation*}
	From Theorem \ref{theoremTwoHyperplanesNotMinimal} and the above argument, we obtain that the touching point $q_{\varepsilon,t_1} \coloneqq (q'_{\varepsilon,t_1},\,q_{\varepsilon,t_1}^N) \in \mathR^{N-1} \times \mathR$ satisfies $|q'_{\varepsilon,t_1}| < \delta(\varepsilon)$ and thus $H_{\capM^{\varepsilon, t_1},s}(q_{\varepsilon,t_1}) < 0$. Moreover, from the construction of $\capM_1^{\varepsilon, t_1}$, we have, by choosing a proper orientation, that
	\begin{equation*}
		|\capA^{\capM}_e(q_{\varepsilon,t_1}) \setminus \capA^{\capM^{\varepsilon, t_1}}_e(q_{\varepsilon,t_1})| > 0 \quad \text{and} \quad |\capA^{\capM^{\varepsilon, t_1}}_e(q_{\varepsilon,t_1}) \setminus \capA^{\capM}_e(q_{\varepsilon,t_1})| = 0.
	\end{equation*}
	Therefore, we obtain
	\begin{align}
		0 &= H_{\capM,s}(q_{\varepsilon,t_1}) - H_{\capM^{\varepsilon, t_1},s}(q_{\varepsilon,t_1}) + H_{\capM^{\varepsilon, t_1},s}(q_{\varepsilon,t_1}) \nonumber\\
		&< \int_{\mathR^N} \frac{\chi_{\capA^{\capM}_i(q_{\varepsilon,t_1})}(x) - \chi_{\capA^{\capM^{\varepsilon, t_1}}_i(q_{\varepsilon,t_1})}(x) + \chi_{\capA^{\capM^{\varepsilon, t_1}}_e(q_{\varepsilon,t_1})}(x) - \chi_{\capA^{\capM}_e(q_{\varepsilon,t_1})}(x)}{|x-q_{\varepsilon,t_1}|^{N+s}}\,dx + 0 \nonumber\\
		&= -2\int_{\capA^{\capM}_e(q_{\varepsilon,t_1}) \setminus \capA^{\capM^{\varepsilon, t_1}}_e(q_{\varepsilon,t_1})} \frac{1}{|x-q_{\varepsilon,t_1}|^{N+s}}\,dx < 0,
	\end{align}
	which is a contradiction. We thus conclude that we can slide the barrier $\capM^{\varepsilon,t}$ until the boundary $\Gamma_1^{\varepsilon,t} = \partial \capM_3^{\varepsilon,t}$ coincides with the boundary $\Gamma_1 = \partial \capM_1$. By symmetry, we can slide the barrier from below and do the same argument. 
	
	Therefore we obtain that two open half-balls of radius $\varepsilon_2$ are contained in a set enclosed by $\capM$ and the union of $\capC \cap \{x_N=0\}$ and $\capC \cap \{x_N=-d\}$. 
	
\end{proof}

As a consequence of Lemma \ref{theoremPoppingCriticalPtwithTwoBdry}, we now prove Theorem \ref{theoremConnectednessCriticalPtsN3}.

\begin{proof}[Proof of Theorem \ref{theoremConnectednessCriticalPtsN3}]
	Assume that $\varepsilon_2$ and $\widetilde{\capM}^{\varepsilon,t}$ are given in the proof of Lemma \ref{theoremPoppingCriticalPtwithTwoBdry} for $\varepsilon \in (0,\,\varepsilon_2]$ and $t \in (0,\,\varepsilon]$. From the definition of $\varepsilon_2$, we can choose $d'>0$ so small that $d' < (2(N-1))^{-1}$ and that $\varepsilon_2$ can be chosen independently of $d$ for any $d \in (0,\, d')$. Moreover, if necessary, we may assume that $\varepsilon_2\,\phi(\varepsilon_2) < (2(N-1))^{-1}$, which is still independent of $d$.
	
	Let $\capM$ be the critical point chosen in Theorem \ref{theoremConnectednessCriticalPtsN3}. We set $d_0 \coloneqq \min\{d',\, \varepsilon_2\}$. From the choice of $\phi$ and $\varepsilon_2$, we have that $d_0\, \phi(d_0) < (2(N-1))^{-1}$. Then we observe that $d(\varepsilon_2) - t = ((N-1)\phi(\varepsilon_2))^{-1} - t > d$ for any $t \in (0,\,\varepsilon_2]$ and thus we have that
	\begin{equation*}
		\Gamma_2^{\varepsilon_2,t} \cap \{(x',\,x_N) \mid -d< x_N <0\} = \emptyset
	\end{equation*}
	for any $t \in (0,\,\varepsilon_2]$ and any $d < d_0$.
	
	Now, by Lemma \ref{theoremPoppingCriticalPtwithTwoBdry}, we find that we can slide the barrier $\capM^{\varepsilon_2,t}$ until the parameter $t$ reaches $0$. Thus, by combining this with Theorem \ref{theoremTwoHyperplanesNotMinimal}, we obtain that
	\begin{align*}
		\capM &\subset \left( \overline{\capC} \setminus \{(x',\,x_N) \mid |x'| < \varepsilon_2 \} \right) \cap \{(x',\,x_N) \mid -d \leq x_N \leq 0\} \nonumber\\
		&= \{(x',\,x_N) \mid \varepsilon_2 \leq |x'| \leq 1, \, -d \leq x_N \leq 0 \}
	\end{align*}
	for any $d < d_0$. 
	
	If $N=2$, then, since $\Gamma_i$ consists of two distinct points for $i \in \{1,\,2\}$, by a simple geometric argument, we conclude that the critical point $\capM$ is disconnected for any $d \in (0,\,d_0)$. Moreover, from the construction of the barrier, we obtain that there exist two connected components $\capM_1$ and $\capM_2$ of $\capM$ such that $\dist(\capM_1,\capM_2) \geq \varepsilon_2$ and $\capM_i$ intersects both $\Gamma_1$ and $\Gamma_2$ for each $i \in \{1,\, 2\}$ at its boundary (see also Remark \ref{remarkCriticalPtsDsmall}). 
	
	If $N \geq 3$, then, by using a homology theory, we conclude that $\Gamma_1$ and $\Gamma_2$ are in the same connected component of the critical point $\capM$ for any $d \in (0,\,d_0)$ (see \cite{stackexchange}). Indeed, we assume by contradiction that there exists a connected component $\capM_0$ of $\capM$ with $\partial \capM_0 = \Gamma_1$. Taking the fundamental class $[\Gamma_1] \in H_{N-2}(\Gamma_1)$ where $H_k(\capS)$ is the $k$-th homology group of $\capS$, we may have that the image in $H_{N-2}(\capM_0)$ of $[\Gamma_1]$ by the induced map of homology from the inclusion $i: \Gamma_1 \to \capM_0$ does not vanish because $\capM_0 \subset \capM \subset A_{\varepsilon_2}$ and $A_{\varepsilon_2}$ deformation-retracts to $\Gamma_1 \simeq \mathS^{N-2}$ where
	\begin{equation*}
		A_{\varepsilon_2} \coloneqq \{(x',\, x_N) \mid \varepsilon_2 \leq |x'| \leq 1, -d \leq x_N \leq 0 \}.
	\end{equation*}
	 However, since $[\Gamma_1]$ is the boundary of $[\capM_0]$ and by using an exact homology sequence of the pair $(\capM_0, \,\Gamma_1)$, we obtain the contradiction. 
\end{proof}
\begin{remark}\label{remarkCriticalPtsDsmall}
	Combining Remark \ref{remarkCriticalPts2D} with Lemma \ref{theoremPoppingCriticalPtwithTwoBdry} and Theorem \ref{theoremConnectednessCriticalPtsN3}, we may observe that two possible critical points of $\Pers$ in dimension 2 whose boundary is $\Gamma_1 \cup \Gamma_2 = \{(\pm 1,\,d), \, (\pm 1, \,-d)\}$ are depicted in Figure \ref{figure7}.
	
	\begin{figure}[!htb]
		\begin{tabular}{cc}

			\begin{minipage}{0.48\hsize}
				\centering
				\includegraphics[keepaspectratio, scale=0.35]{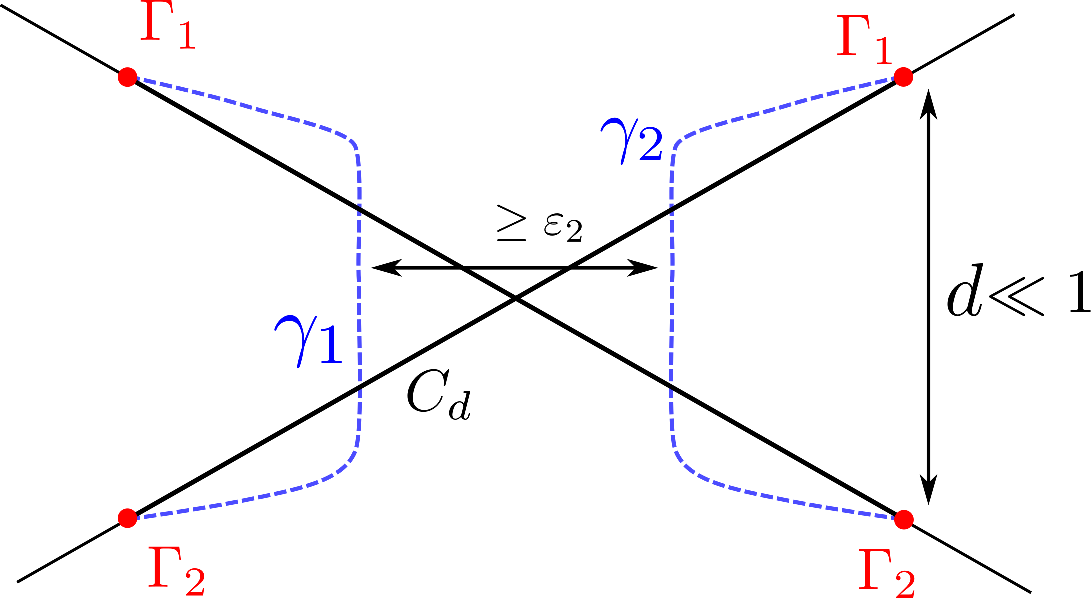}
			\end{minipage}
			
			\begin{minipage}{0.48\hsize}
				\centering
				\includegraphics[keepaspectratio, scale=0.35]{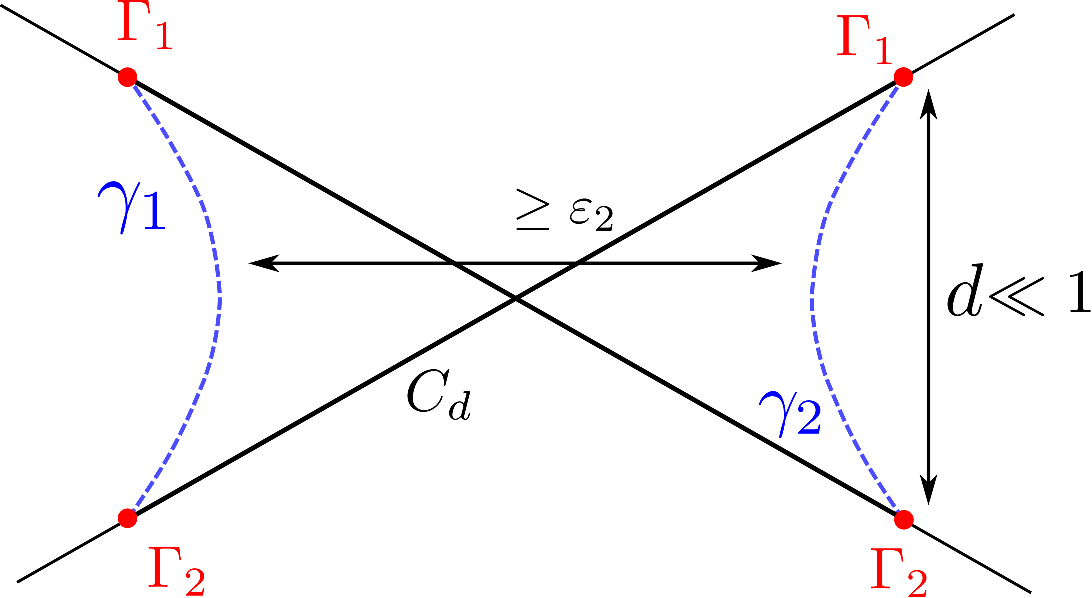}
			\end{minipage}
			
		\end{tabular}
		\caption{Two possible critical points $\gamma$ of $\Pers$ in dimension 2 with $\partial \gamma = \Gamma_1 \cup \Gamma_2$ are shown with dashed lines and the cone $C_d$ defined in \eqref{cone2DPositiveorNegativeFracMC} is shown with crossed lines. On the right, $\gamma$ does not intersect with $C_d$ except at their boundaries $\Gamma_1$ and $\Gamma_2$. In both figures, two distinct connected components $\gamma_1$ and $\gamma_2$ of $\gamma$ are placed at mutually positive distance of at least $\varepsilon_2>0$.}
		\label{figure7}
	\end{figure} 
	
\end{remark}

Finally in this section, we prove Theorem \ref{theoremDisconnectednessCriticalPts}. The idea of the proof is basically the same as the one in \cite[Theorem 1.2]{DOV22}, i.e., we use the ``sliding method'' that is developed by Dipierro, Savin, and Valdinoci in \cites{DSV16, DSV17, DSV20}. 
\begin{proof}[Proof of Theorem \ref{theoremDisconnectednessCriticalPts}]
	Let $\capM$ be the critical point selected in Theorem \ref{theoremDisconnectednessCriticalPts} and we set $\Gamma \coloneqq \Gamma_1 \cup \Gamma_2$.
	
	Given $t \in \mathR$ and $\alpha \in (0,\,1)$, we consider the open ball $B_{d^{\alpha}/2}(p_{t,d})$ where $p_{t,d} \coloneqq (te'_1,\,\frac{-d}{2}) \in \mathR^{N-1} \times \mathR$ and $e'_1 \coloneqq (1,\,0,\cdots,0) \in \mathR^{N-1}$. Here we take $d$ conveniently large so that $d - d^{\alpha} > 100$. Then we slide the ball from left to right until it touches the critical point $\capM$, which means that we vary $t$ from $t = -\infty$ to $t = +\infty$. Note that $B_{d^{\alpha}/2}(p_{t,d}) \subset \capC^c$ for $|t|>1+d^{\alpha}/2$ and $B_{d^{\alpha}/2}(p_{t,d}) \cap \Gamma = \emptyset$ for any $t$. To prove the claim, we suppose by contradiction that there exists $t_0 \in \mathR$ such that $\overline{B}_{d^{\alpha}/2}(p_{t,d}) \cap \capM = \emptyset$ for $t < t_0$ and $\partial B_{d^{\alpha}/2}(p_{t_0,d}) \cap \capM \neq \emptyset$.
	
	We choose a point $q \in \partial B_{d^{\alpha}/2}(p_{t_0,d}) \cap \capM$. Note that, due to Theorem \ref{theoremTwoHyperplanesNotMinimal}, $q \in \capC$. By the Euler-Lagrange equation, we have that
	\begin{equation}\label{eulerLagrageThm1.3}
		H_{\capM,s}(q) = 0.
	\end{equation}
	Moreover, by choosing a proper orientation, we can choose the interior $\capA_i^{\capM}(q)$ and exterior $\capA_e^{\capM}(q)$ at $q$ of $\capM$ in such a way that 
	\begin{equation}\label{inclusionSlidingBallInterior}
		B_{\frac{d^{\alpha}}{2}}(p_{t_0,d}) \subset \capA_i^{\capM}(q) \quad \text{and} \quad \capA_e^{\capM}(q) = \mathR^N \setminus \left( \capA_i^{\capM}(q) \cup \capM \right).
	\end{equation}
	We now consider the symmetric ball of $B_{d^{\alpha}/2}(p_{t_0,d})$ with respect to $q$ and we denote it by $\widetilde{B} \coloneqq B_{d^{\alpha}/2}(\widetilde{p}_{t_0,d})$ where $\widetilde{p}_{t_0,d} \coloneqq p_{t_0,d} + 2(q-p_{t_0,d})$.
	
	We define a cylinder $S_d$ as
	\begin{equation*}
		S_d \coloneqq \{(x',\,x_N) \mid |x'| < 2,\, -d < x_N < 0\}.
	\end{equation*}
	Notice that $\capC \cap \{(x',\,x_N) \mid -d< x_N < 0\} \subset S_d$ and $\capM \subset S_d$ thanks to Theorem \ref{theoremTwoHyperplanesNotMinimal}. From the symmetry of the balls, we have
	\begin{equation*}
		\int_{S_d \cap B_{\frac{d^{\alpha}}{2}}(p_{t_0,d})} \frac{dx}{|x-q|^{N+s}} = \int_{S_d \cap \widetilde{B}} \frac{dx}{|x-q|^{N+s}}
	\end{equation*}
	and therefore, from \eqref{inclusionSlidingBallInterior},
	\begin{align}
		\int_{S_d} \frac{\chi_{\capA_i^{\capM}(q)}(x) - \chi_{\capA_e^{\capM}(q)}(x)}{|x-q|^{N+s}}\,dx &= \int_{S_d \cap B_{\frac{d^{\alpha}}{2}}(p_{t_0,d})} \frac{\chi_{\capA_i^{\capM}(q)}(x) - \chi_{\capA_e^{\capM}(q)}(x) }{|x-q|^{N+s}}\,dx \nonumber\\
		&\qquad + \int_{S_d \cap \widetilde{B}} \frac{\chi_{\capA_i^{\capM}(q)}(x) - \chi_{\capA_e^{\capM}(q)}(x) }{|x-q|^{N+s}}\,dx \nonumber\\
		&\qquad \quad + \int_{S_d \setminus \left( B_{\frac{d^{\alpha}}{2}}(p_{t_0,d}) \cup \widetilde{B}\right)} \frac{\chi_{\capA_i^{\capM}(q)}(x) - \chi_{\capA_e^{\capM}(q)}(x) }{|x-q|^{N+s}}\,dx \nonumber\\
		&\geq \int_{S_d \cap B_{\frac{d^{\alpha}}{2}}(p_{t_0,d})} \frac{dx}{|x-q|^{N+s}} - \int_{S_d \cap \widetilde{B}} \frac{dx}{|x-q|^{N+s}} \nonumber\\
		&\qquad - \int_{S_d \setminus \left( B_{\frac{d^{\alpha}}{2}}(p_{t_0,d}) \cup \widetilde{B}\right)} \frac{dx}{|x-q|^{N+s}} \nonumber\\
		&\geq - \int_{S_d \setminus \left( B_{\frac{d^{\alpha}}{2}}(p_{t_0,d}) \cup \widetilde{B}\right)} \frac{dx}{|x-q|^{N+s}}. \label{computationFracPeriThm1.3}
	\end{align}
	By employing the result in \cite[Lemma 3.1]{DSV16} with $R = d^{\alpha}/2$ and $\lambda = d^{-\frac{\alpha}{2}}$, we obtain
	\begin{equation*}
		\int_{B_{d^{\frac{\alpha}{2}}}(q) \setminus \left( B_{\frac{d^{\alpha}}{2}}(p_{t_0, d}) \cup \widetilde{B}\right) } \frac{dx}{|x-q|^{N+s}} \leq C_0 \, d^{-\frac{1+s}{2}\alpha}
	\end{equation*}
	where $C_0>0$ is a constant depending only on $N$ and $s$. As a consequence, we obtain
	\begin{align}
		\int_{S_d \setminus \left( B_{\frac{d^{\alpha}}{2}}(p_{t_0,d}) \cup \widetilde{B}\right)} \frac{dx}{|x-q|^{N+s}} &\leq \int_{B_{d^{\frac{\alpha}{2}}}(q) \setminus \left( B_{\frac{d^{\alpha}}{2}}(p_{t_0,d}) \cup \widetilde{B}\right)} \frac{dx}{|x-q|^{N+s}} \nonumber\\
		&\qquad + \int_{S_d \setminus B_{d^{\frac{\alpha}{2}}}(q) } \frac{dx}{|x-q|^{N+s}} \nonumber\\
		&\leq C_0 \, d^{-\frac{1+s}{2}\alpha} + \int_{\mathR^N \setminus B_{d^{\frac{\alpha}{2}}}(q) } \frac{dx}{|x-q|^{N+s}} \nonumber\\
		&\leq C_0 \, d^{-\frac{1+s}{2}\alpha} + C_1 \, d^{-\frac{s}{2}\alpha} \leq C_2 \, d^{-\frac{s}{2}\alpha} \label{computationFracPeri02Thm1.3}
	\end{align} 
	where $C_2 \coloneqq C_0 + C_1$ is a constant depending only on $N$ and $s$. From \eqref{computationFracPeriThm1.3} and \eqref{computationFracPeri02Thm1.3}, we have
	\begin{align}
		\int_{\mathR^N} \frac{\chi_{\capA_i^{\capM}(q)}(x) - \chi_{\capA_e^{\capM}(q)}(x)}{|x-q|^{N+s}}\,dx &= \int_{S_d} \frac{\chi_{\capA_i^{\capM}(q)}(x) - \chi_{\capA_e^{\capM}(q)}(x)}{|x-q|^{N+s}}\,dx \nonumber\\
		&\qquad + \int_{S_d^c} \frac{\chi_{\capA_i^{\capM}(q)}(x) - \chi_{\capA_e^{\capM}(q)}(x)}{|x-q|^{N+s}}\,dx \nonumber\\
		&\geq -C_2 \, d^{-\frac{s}{2}\alpha} + \int_{S_d^c} \frac{\chi_{\capA_i^{\capM}(q)}(x) - \chi_{\capA_e^{\capM}(q)}(x)}{|x-q|^{N+s}}\,dx. \label{computationFracPeri03Thm1.3}
	\end{align}
	
	Now we consider the contributions from $\capA_i^{\capM}(q)$ and $\capA_e^{\capM}(q)$ in $S_d^c$. We now define $C_{\Gamma}(q)$ by a (filled) cone of vertex $q$ whose boundary passes through $\Gamma$. Moreover we define $C_{S_d}(q)$ by a (filled) cone of vertex $q$ whose boundary passes through	\begin{equation*}
		\partial S_d \cap \{(x',\,x_N) \mid x_N = 0\} \quad \text{and} \quad \partial S_d \cap \{(x',\,x_N) \mid x_N = -d\}.
	\end{equation*}
	From the definitions of $S_d$ and $\Gamma$, we observe that
	\begin{equation*}
		C_{\Gamma}(q) \subset C_{S_d}(q).
	\end{equation*} 
	We now set $\widehat{C}_{\Gamma}(q) \coloneqq C_{\Gamma}(q) \cap \{(x',\,x_N) \mid x_N > 0 \; \text{or} \; x_N < -d\}$. We then rotate $\widehat{C}_{\Gamma}(q)$ by angle $\pi/2$ or $-\pi/2$ with respect to the straight line parallel to the $x_1$-axis passing trough $q$ (if $N=2$, then we just rotate $\widehat{C}_{\Gamma}(q)$ by angle $\pi/2$ or $-\pi/2$ with respect to $q$). Since we choose $d$ so large that $d - d^{\alpha} > 100$, we obtain that
	\begin{equation*}
		R(\widehat{C}_{\Gamma}(q)) \subset S_d^c \cap \capA_i^{\capM}(q) \cap C_{S_d}(q)^c
	\end{equation*}
	where $R(\widehat{C}_{\Gamma}(q))$ is an image of $\widehat{C}_{\Gamma}(q)$ by the rotation map $R:\mathR^N \to \mathR^N$ in the above. See Figure \ref{figureSlidingBallThm1.3} for an intuitive understanding. Then, observing that $R(q) = q$ and
	\begin{equation*}
		S_d^c \cap \capA_e^{\capM}(q) = \widehat{C}_{\Gamma}(q)
	\end{equation*}
	and by a change of variables, we have
	\begin{equation*}
		\int_{R(\widehat{C}_{\Gamma}(q))} \frac{dx}{|x-q|^{N+s}} = \int_{\widehat{C}_{\Gamma}(q)} \frac{dx}{|x-q|^{N+s}} = \int_{S_d^c \cap \capA_e^{\capM}(q)} \frac{dx}{|x-q|^{N+s}}.
	\end{equation*}
	
	\begin{figure}[h]
		\begin{center}
			\includegraphics[keepaspectratio, scale=0.55]{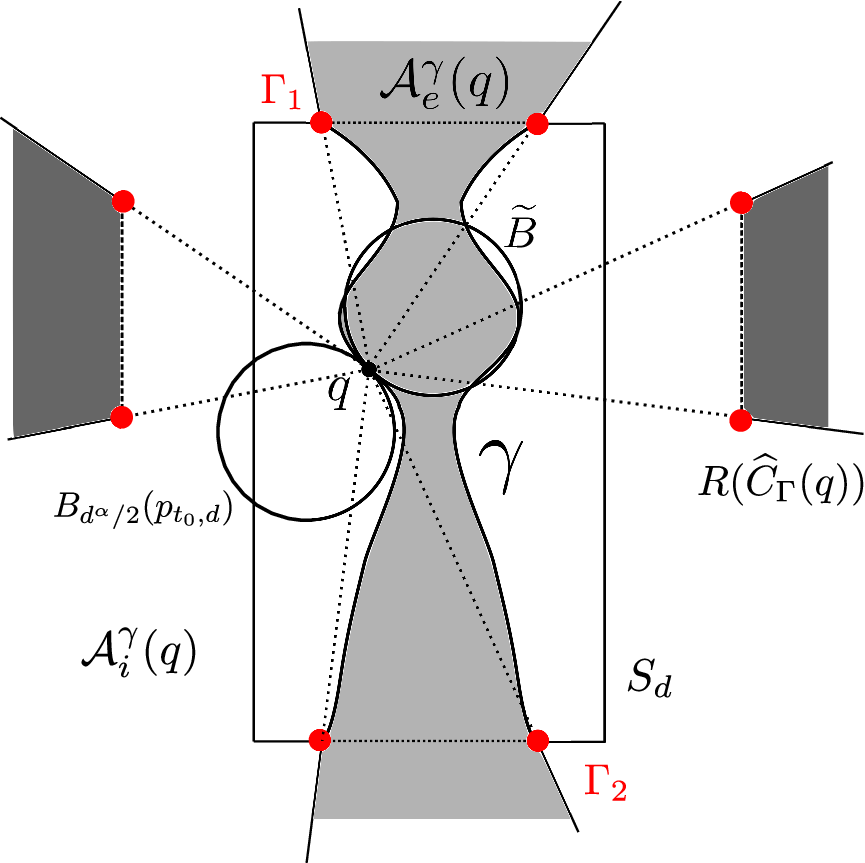}
		\end{center}
		\caption{The touching between the ball $B_{d^{\alpha}/2}(p_{t_0,d})$ and its symmetric ball $\widetilde{B}$ at $q$. The image of the set $\widehat{C}_{\Gamma}(q)$ by the rotation map $R$ is depicted in dark gray and the set $\capA_e^{\gamma}(q)$ in light gray.}
		\label{figureSlidingBallThm1.3}
	\end{figure}
	
	From the definitions of $S_d$ and the rotation map $R$, we
	can choose an open ball outside $S_d$ and $R(\widehat{C}_{\Gamma}(q))$ but close to $q$, i.e., we have
	\begin{equation*}\label{smallBallContainedSd}
		B_1(q+5e_1) \subset \left( S_d^c \cap \capA_i^{\capM}(q)  \right) \setminus R(\widehat{C}_{\Gamma}(q)) 
	\end{equation*}
	where we set $e_1 \coloneqq (1,\,0,\cdots,0) \in \mathR^N$. Thus, we obtain
	\begin{align*}
		\int_{S_d^c} \frac{\chi_{\capA_i^{\capM}(q)}(x) - \chi_{\capA_e^{\capM}(q)}(x)}{|x-q|^{N+s}}\,dx &= \int_{S_d^c \cap \capA_i^{\capM}(q)} \frac{dx}{|x-q|^{N+s}} - \int_{\widehat{C}_{\Gamma}(q)} \frac{dx}{|x-q|^{N+s}} \nonumber\\
		&\geq \int_{S_d^c \cap \capA_i^{\capM}(q) \cap R(\widehat{C}_{\Gamma}(q))} \frac{dx}{|x-q|^{N+s}} \nonumber\\
		&\qquad + \int_{\left( S_d^c \cap \capA_i^{\capM}(q) \right) \setminus R(\widehat{C}_{\Gamma}(q))} \frac{dx}{|x-q|^{N+s}} \nonumber\\
		&\qquad \quad - \int_{\widehat{C}_{\Gamma}(q)} \frac{dx}{|x-q|^{N+s}} \nonumber\\
		&\geq \int_{B_1(q+5e_1)} \frac{dx}{|x-q|^{N+s}} \nonumber\\
		&= \int_{B_1(5e_1)} \frac{dx}{|x|^{N+s}} \eqqcolon C_3 > 0
	\end{align*}
	where $C_3$ depends only on $N$ and $s$. This with \eqref{computationFracPeri03Thm1.3} leads to
	\begin{equation*}
		H_{\capM,s}(q) = \int_{\mathR^N} \frac{\chi_{\capA_i^{\capM}(q)}(x) - \chi_{\capA_e^{\capM}(q)}(x)}{|x-q|^{N+s}}\,dx \geq -C_2 \,d^{-\frac{s}{2}\alpha} + C_3.
	\end{equation*}
	Therefore, there exists $d_1=d_1(N,s)>0$ such that $H_{\capM,s}(q) > 0$ for any $d > d_1$ and this contradicts the Euler-Lagrange equation \eqref{eulerLagrageThm1.3}.
\end{proof}

\begin{remark}
	From Lemma \ref{theoremPoppingCriticalPtwithTwoBdry} and the choice of $\varepsilon_2$ in the proof of Lemma \ref{theoremPoppingCriticalPtwithTwoBdry}, we also obtain that, for sufficiently large $d$, a set enclosed by $\capM$ and the union of $\capC \cap \{x_N=0\}$ and $\capC \cap \{x_N=-d\}$ contains two half-balls of radius $\varepsilon_2 \thickapprox \phi^{-1}(d^{-1})$ where $\phi^{-1}$ is as in the proof of Lemma \ref{theoremPoppingCriticalPtwithTwoBdry}.
\end{remark}

\begin{center}
	\textbf{{\small Acknowledgments}}
\end{center}
The author would like to thank Brian Seguin for fruitful discussions and many comments. The author was supported by the DFG Collaborative Research Center TRR 109, ``Discretization in Geometry and Dynamics''.

\bibliographystyle{plain}
\bibliography{biblio_fractional_mini_with_bdry_onoue}

\end{document}